\theoremstyle{plain}
\newtheorem{theorem}{Theorem}
\newtheorem{lemma}[theorem]{Lemma}
\newtheorem{corollary}[theorem]{Corollary}
\newtheorem{proposition}[theorem]{Proposition}
\newtheorem{claim}{Claim}[theorem]
\theoremstyle{definition}
\title{A Polynomial Time
	Algorithm for the $k$-Disjoint Shortest Paths Problem}
\author[1]{William Lochet}
\affil[1]{\small University of Bergen, Norway}
\date{}
\begin{document}
\maketitle

\begin{abstract}
    The disjoint paths problem is a fundamental problem in algorithmic graph theory and combinatorial optimization. For a given graph $G$ and a set of $k$ pairs of terminals in $G$, it asks for the existence of $k$ vertex-disjoint paths connecting each pair of terminals. The proof of Robertson and Seymour [JCTB 1995] of the existence of an $n^3$ algorithm for any fixed $k$ is one of the highlights of their Graph Minors project. In this paper, we focus on the version of the problem where all the paths are required to be shortest paths. This problem, called the disjoint shortest paths problem, was introduced by Eilam-Tzoreff [DAM 1998] where she proved that the case $k = 2$ admits a polynomial time algorithm. This problem has received some attention lately, especially since the proof of the existence of a polynomial time algorithm in the directed case when $k = 2$ by Bérczi and Kobayashi [ESA 2017]. However, the existence of a polynomial algorithm when $k = 3$ in the undirected version remained open since 1998. 
    
    In this paper we show that for any fixed $k$, the disjoint shortest paths problem admits a polynomial time algorithm. In fact for any fixed $C$, the algorithm can be extended to treat the case where each path connecting the pair $(s,t)$ has length at most $d(s,t) + C$.  
\end{abstract}

\section{Introduction}

Given a graph $G$ and a set of pairs of vertices $(s_1, t_1), \dots, (s_k, t_k)$, 
the \textit{Vertex-Disjoint Paths Problem} asks whether there exists a set of 
vertex-disjoint paths $P_1, \dots, P_k$ such that every $P_i$ is an $(s_i,t_i)$ path. 
This is a classical NP-hard \cite{karp1975} problem in graph theory with applications to VLSI-layouts and networks problem \cite{schrijver1990paths} which has been extensively studied. 
The proof of the existence of a $O(n^3)$ algorithm for any fixed $k$
by Robertson and Seymour \cite{Seymour95} is one of the highlights from the Graph Minors project, and the running time has later been improved to $O(n^2)$ 
\cite{KAWARABAYASHI2012424}.
In the directed case, the problem is NP-hard even for $k =2$ \cite{FORTUNE1980111}, but some results are known 
for special classes of digraphs like acyclic digraphs \cite{FORTUNE1980111}, planar digraphs 
\cite{schrijver1994finding} or tournaments \cite{CHUDNOVSKY2015582}. 
 
One natural question is, given an instance of the Vertex-Disjoint Path Problem, to find a solution which minimises the sum of the lengths of the $P_i$. This problem appears to be much harder, as only the case $k = 2$ was recently solved by Björklund and Husfeldt \cite{Bjorklund}. In fact, even deciding if the problem admits an optimal solution, i.e where every $P_i$ is a shortest path between $s_i$ and $t_i$ is open for $k \geq 3$. 

This problem was first considered by Eilam-Tzoreff \cite{EILAMTZOREFF1998113} 20 years ago. In the same paper, she gave an algorithm for the case $k=2$ 
and conjectured that a polynomial algorithm exists for any fixed $k$, both in the directed and undirected setting. This problem has received some attention lately. In particular Bérczi and Kobayashi \cite{berczi2017directed} gave a nice proof that the directed version admits a polynomial time algorithm when $k = 2$. They also show that the algorithm of Schrijver \cite{schrijver1994finding} can be used to prove Eilam-Tzoreff's conjecture when the input (di)graph is planar.
The goal of this paper is to solve this problem for any $k$ in the undirected case.

\begin{theorem}\label{thm:SDP}
	For any fixed integer $k$, there exists an algorithm running in $n^{O(k^{5^k})}$ time that decides, given a graph $G$ and $k$ pairs of vertices $(s_1, t_1), \dots (s_k, t_k)$, of the existence of $k$ internally vertex-disjoint paths $P_1, \dots, P_k$ such that each $P_i$ is a shortest $(s_i, t_i)$-path.
\end{theorem}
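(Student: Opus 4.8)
The plan is to prove Theorem~\ref{thm:SDP} by induction on the number of pairs $k$, turning a $k$-pair instance into polynomially many instances on $k-1$ pairs. The guiding structural fact is the rigidity forced by the shortest-path constraint. For each pair write $D_i = d(s_i,t_i)$ and let $V_i = \{\, v : d(s_i,v)+d(v,t_i)=D_i \,\}$ be the vertex set of its shortest-path DAG, that is, the union of all shortest $s_i$--$t_i$ paths. Every admissible $P_i$ lives inside $V_i$, meets each distance level $\{\, v : d(s_i,v)=\ell \,\}$ in exactly one vertex, and has the property that each of its subpaths is again a shortest path. If the sets $V_1,\dots,V_k$ met only at terminals the problem would be trivial, since each path could then be routed independently inside its own DAG; thus all the difficulty is concentrated on the pairwise overlaps $V_i\cap V_j$, where distinct paths compete for the same vertices.

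The inductive step fixes attention on one pair, say pair $1$, and tries to commit to a routing of $P_1$ before recursing on the remaining pairs in the graph obtained from $G$ by deleting the interior of $P_1$. The obstacle is that there can be exponentially many shortest $s_1$--$t_1$ paths, so one cannot enumerate them. Instead I would argue that it suffices to try only a restricted, polynomially described family of candidate routings. Concretely, among all solutions I would fix one that is extremal for a suitable potential (for instance one that pushes $P_1$ maximally to one side of the competitor regions $\bigcup_{j\ge 2}V_j$), and prove that for such an extremal solution the essential interaction of $P_1$ with these regions is captured by a bounded set of interface vertices: outside a bounded number of segments, $P_1$ runs through $V_1\setminus\bigcup_{j\ge 2}V_j$, where its precise routing is irrelevant to disjointness and can be completed canonically. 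Guessing the (boundedly many) interface vertices, together with the order in which $P_1$ visits them, leaves a residual instance that the induction hypothesis solves.

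Unwinding the recursion gives the running time. If $f(k)$ denotes the exponent of $n$, then guessing the interface of $P_1$ costs $n^{O(f(k-1))}$ choices and each choice spawns a $(k-1)$-pair instance solved in time $n^{O(f(k-1))}$, so $f$ obeys a recursion of the shape $f(k)=f(k-1)^{O(1)}$, which unwinds to the tower-type bound $O(k^{5^k})$ claimed in the statement. The promised extension to paths of length at most $d(s_i,t_i)+C$ should follow by running the same argument on the thickened near-geodesic regions $V_i^{(C)}=\{\, v : d(s_i,v)+d(v,t_i)\le D_i+C \,\}$; a bounded additive slack only inflates the constants in the rigidity argument by a factor depending on $C$.

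The main obstacle, and the technical core, is the rigidity lemma bounding the interaction of the extremal path $P_1$ with the competitor regions by a function of $k$ alone, independent of $n$. This is exactly the place where the shortest-path hypothesis must be used decisively: the analogous statement is false for ordinary (non-geodesic) disjoint paths, which is precisely why that problem needs the full Graph Minors machinery rather than a direct guess-and-recurse. I expect its proof to rest on an exchange argument showing that a path crossing a competitor region too often can be locally rerouted or slid without increasing its length while decreasing the chosen potential, contradicting extremality; controlling how the resulting bound degrades as pairs are peeled off one at a time is what produces the iterated-exponential dependence on $k$.
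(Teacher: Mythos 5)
There is a genuine gap: the ``rigidity lemma'' you identify as the technical core is only conjectured (``I would argue'', ``I expect its proof to rest on an exchange argument''), and it carries the entire weight of the theorem. Moreover, the form in which you state it does not match what is actually true. The paper never bounds the number of segments in which $P_1$ meets the competitor regions $\bigcup_{j\ge 2}V_j$, and no extremal choice of $P_1$ is made. What \emph{is} bounded is the number of pieces in a path partition of each $P_i$ after which every cross-colour pair of pieces is either ``blind'' or lies in a common \emph{bi-coloured component} (a connected piece of the subgraph where the two BFS layerings agree up to a constant shift, Lemma~\ref{lemma_diff_pos}); the key structural inputs are that a colour-$i$ path meets such a component in a single subpath (Lemma~\ref{lem:compo_inter_subpath}) and that two paths admit at most one \emph{conflicting} component (Lemma~\ref{lem:inter_main}). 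Crucially, blind pairs of requests can still be routed through common vertices, so your claim that outside the guessed interface the routing ``is irrelevant to disjointness and can be completed canonically'' is false; the residual coordination is resolved not by fixing $P_1$ first but by a coupled dynamic program over $l$-tuples of vertices generalizing Fortune--Hopcroft--Wyllie (Lemma~\ref{lem:blind_case}), whose correctness rests precisely on blindness.

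A second structural mismatch: your recursion peels off \emph{pairs}, whereas the paper's induction (Lemma~\ref{lem:decomp_sol}) peels off \emph{colours}, i.e.\ BFS layerings. This matters because after handling colour $1$, the subpaths that still interact with other colours live inside bi-coloured components, where colour $1$'s order coincides with another colour's order; they can therefore be reassigned that other colour and pushed into a $(k-1)$-shortest-graph instance with polynomially more requests. Your plan has no analogous mechanism for making the residual $(k-1)$-pair instance independent of the committed routing of $P_1$ (and deleting the interior of $P_1$ also changes distances, so the remaining constraints must be kept relative to $G$, not to $G-P_1$). Without a proof of a rigidity statement of the correct shape and an algorithm for the coupled residual problem, the proposal does not yet constitute a proof.
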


We also show that this problem is $W[1]$-hard parameterized by $k$, which means that we cannot hope to remove the dependency in $k$ in the exponent. However, we can extend the previous result to the case where paths are not required to be shortest paths, but of length at most $d(s_i, t_i) + C$, where $C$ is a fixed constant.  

\begin{corollary}\label{thm:SDPC}
	For any fixed integers $k$ and $C$ , there exists an algorithm running in $n^{O((Ck)^{5^k})}$ that decides, given a graph $G$ and $k$ pairs of vertices $(s_1, t_1), \dots (s_k, t_k)$, of the existence of $k$ internally vertex-disjoint paths $P_1, \dots, P_k$ such that each $P_i$ is a path of length at most $d(s_i, t_i) +C$ between $s_i$ and $t_i$. 
\end{corollary}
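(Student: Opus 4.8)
The plan is to prove Corollary~\ref{thm:SDPC} by reducing the ``length at most $d(s_i,t_i)+C$'' problem to the exact shortest-paths problem solved in Theorem~\ref{thm:SDP}. The key observation is that a path from $s_i$ to $t_i$ of length at most $d(s_i,t_i)+C$ is a shortest path that is permitted to ``waste'' up to $C$ units of length. I would encode this slack directly into the graph by a gadget construction: for each terminal pair, I guess the exact length $\ell_i \in \{d(s_i,t_i), d(s_i,t_i)+1, \dots, d(s_i,t_i)+C\}$ that the $i$-th path will use, which costs only a factor of $(C+1)^k$ over all pairs since $C$ and $k$ are fixed.

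First I would build an auxiliary graph $G'$ in which I can force a desired path to have a prescribed length while behaving like a shortest path. One clean way is a \emph{layered} or \emph{weight-blowup} construction: subdivide or add parallel ``detour'' edges so that taking a slack of $c_i = \ell_i - d(s_i,t_i)$ units becomes achievable only through controlled gadgets attached near each terminal, while the rest of the graph keeps unit lengths. Concretely, I would attach to $s_i$ a short path (a ``spool'' of length $c_i$) so that the new distance from the spool's end to $t_i$ in $G'$ equals $d(s_i,t_i)+c_i$; then any shortest path in $G'$ between the modified terminals corresponds exactly to a length-$(d(s_i,t_i)+c_i)$ path in $G$. The gadgets must be placed so as not to interfere with each other or create spurious shortcuts, and so that internal vertex-disjointness in $G'$ translates back to internal vertex-disjointness in $G$.

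After the reduction, for each of the $(C+1)^k$ guesses of the slack vector $(c_1,\dots,c_k)$ I would invoke Theorem~\ref{thm:SDP} on the modified instance $(G', (s_i', t_i'))$, asking for $k$ internally vertex-disjoint \emph{shortest} paths. If any guess yields a positive answer, the original instance has a solution, and conversely any genuine solution of the bounded-length problem fixes a slack vector that one of the guesses matches. The total running time is $(C+1)^k$ times the running time of Theorem~\ref{thm:SDP} applied to a graph whose size is blown up by only $O(Ck)$ vertices; since $C$ and $k$ are fixed, this remains $n^{O((Ck)^{5^k})}$ as claimed, absorbing the polynomial gadget overhead and the $(C+1)^k$ factor into the exponent's constant.

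The main obstacle I expect is ensuring that the gadget construction is \emph{faithful}: the spools attached to the terminals must guarantee that every shortest path in $G'$ uses exactly the intended amount of slack inside the original graph and no more, and that no two gadgets create an unintended shortest path or share vertices with a solution path. I would need to verify carefully that $d_{G'}(s_i',t_i') = d_G(s_i,t_i) + c_i$ for the intended pairing and that shortest paths in $G'$ restricted to $V(G)$ are precisely the admissible bounded-length paths in $G$, handling parity and potential interactions between the $k$ gadgets. Once this correspondence is established as a bijection (up to the spool prefixes), the correctness and the running-time bound follow directly.
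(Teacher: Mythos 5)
Your reduction has a genuine gap at its core. Attaching a ``spool'' of length $c_i$ to $s_i$ and asking for a shortest $s_i'$--$t_i$ path in $G'$ does not encode paths of length $d(s_i,t_i)+c_i$ in $G$: any shortest path from $s_i'$ to $t_i$ must traverse the spool (length $c_i$) and then follow an \emph{exact} shortest $s_i$--$t_i$ path inside $G$. The spool is a mandatory prefix, not usable slack. The whole difficulty of the bounded-length problem is that the excess $c_i$ can be spent \emph{anywhere} along the path --- via edges that stay within a BFS level or go backwards a level, possibly deep in the middle of the path --- and no gadget ``attached near each terminal'' can capture that. (The natural fix, a layered product $G\times\{0,\dots,\ell_i\}$ in which every $(s,0)$--$(t,\ell_i)$ path has length exactly $\ell_i$, destroys vertex-disjointness because a vertex of $G$ has many copies.) So the correspondence you hope to establish as a bijection does not exist for this construction, and guessing the slack vector $(c_1,\dots,c_k)$ does not help.

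The paper's route is different and avoids this. A path of length at most $d(s_i,t_i)+C$ uses at most $C$ edges that are not between consecutive levels of the BFS from $s_i$ (if $f,z,b$ count forward, flat and backward edges then $f-b=d$ and $f+z+b\le d+C$ give $z+b\le C$). One guesses these at most $kC$ edges over all paths; removing them splits each $P_i$ into at most $C+1$ segments, each of which is monotone in the BFS layering from $s_i$, i.e.\ a path of colour $i$. This yields an instance of the $k$-DSP problem of Theorem~\ref{thm:main} with at most $kC$ requests but still only $k$ colours, solved in $n^{O((kC)^{5^k})}$. Note that reducing instead to Theorem~\ref{thm:SDP} with $kC$ terminal pairs (even if your gadget worked) would give $kC$ distinct BFS layerings and hence a bound of the form $n^{O((kC)^{5^{kC}})}$, much worse than claimed; keeping the number of colours at $k$ while letting the number of requests grow is exactly what the separation between $k$ and $l$ in Theorem~\ref{thm:main} is for, and your proposal does not engage with this point.
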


\section{Preliminaries}
For any integer $k$, $[k]$ denotes the set of integers between $1$ and $k$, and for any integer $j \leq k$,
$[j..k]$ denotes the set of integers between $j$ and $k$. The \textit{length} of a path $P$ correspond to the number of edges in the path, and given two vertices $x$ and $y$ in the same connected component, $d(x,y)$ denotes the minimal length of a path between $x$ and $y$.  

A graph $G$ is said to be a \textit{$k$-shortest} graph if there exists $k$ partitions of $G$ 
$(V^1_1, \dots, V^1_{l_1})$, $(V^2_1, \dots, V^2_{l_2})$, $\dots, (V^k_1, \dots, V^k_{l_k})$ 
such that $xy$ is an edge of $G$ implies that there exists $i \in [k]$ and $j \in [l_i - 1]$ 
such that $x \in V^i_{j}$ and $y \in V^i_{j+1}$. Moreover, if $x \in V^i_j$ and $y \in V^i_l$, 
for $i \in [k]$, $j,l \in [l_i]$ and $|j-l| > 1$ then $xy \not \in E(G)$. 
Intuitively, one way to obtain a $k$-shortest graph is to start from any graph, doing $k$ 
breadth-first searches from 
different vertices and removing all the edges which are not between two consecutive levels of at 
least one of the BFS. For a $k$-shortest graph $G$, we associate naturally $k$ colours to the 
edges of $G$ as follows: any edge between $x\in  V^i_{j}$ and $y \in V^i_{j+1}$ is said to be of colour 
$i$. Note that the same edge can be of different colours. Moreover, each colour $i$ defines a partial 
order on the vertices of $G$ $\leq_i$ as follows: $x \leq_i y$ if $x \in V^i_j$ and $y \in V^i_r$ 
with $j \leq r$. This naturally defines an orientation of the edges of colour $i$. Note that the same 
edge can have two different orientations for two different colours.
Let $G$ be a $k$-shortest graph and $r$ and $i$ be two indices. 
We say that a path $P_i = x_1, \dots, x_r$ is a \textit{path of colour $i$} 
if for every $j \in [r-1]$, $x_jx_{j+1}$ is an edge of colour $i$ and 
$x_j \leq_i x_{j+1}$. Note that, since whenever $u \in V^i_j$ and $v \in V^i_l$ with $|j-l|>1$ there 
is no edge between $u$ and $v$, any path of colour $i$ between $x$ and $y$
is also a shortest path in $G$. Moreover, concatenating two paths of colour $i$ also gives a path of colour $i$. 
By convention, we consider the paths of colour $i$ to be oriented from the endpoint which belongs 
to the part of $(V^i_1, \dots, V^i_{l_i})$
with the lowest index to the endpoint with the largest one. In particular, an  \textit{$(x,y)$-path of colour 
$i$} is a path of colour $i$ between $x$ and $y$ oriented from $x$ to $y$. 
For a directed path $P$ and two vertices $x$ and 
$y$ belonging to this path, $P[x,y]$ denotes the subpath of $P$ from $x$ to $y$. By convention, if $y$
is before $x$ along $P$, then $P[x,y]$ will be the empty path.
The \textit{length} of a path is its
number of edges. A \textit{path-partition} of a path $P$ is a set of internally
vertex-disjoint subpaths of $P$ such that concatenation of all the paths gives $P$. Let $Q_1, Q_2$ be two different path partitions 
of the same path $P$, the \textit{intersection} of $Q_1$ and $Q_2$ is the path partition of $P$ obtained 
as follows: If $S$ is the set of vertices which are endpoints of paths of either $Q_1$ or $Q_2$, then 
the intersection of $Q_1$ and $Q_2$ consists of all the subpaths of $P$ between vertices of $S$ which are 
consecutive along $P$. Note that the number of paths in the intersection of $Q_1$ and $Q_2$ is at most 
the sum of the number of paths in $Q_1$ and $Q_2$. Moreover, every path in the intersection is 
a subpath of some path in $Q_1$ and some path in $Q_2$. For an oriented edge $e=xy$, $x$ is called 
the \textit{tail} of $e$ and is denoted as $t(e)$ and $y$ the \textit{head}, denoted as $h(e)$.

Let $G$ be a $k$-shortest graph, $(s_1, t_1), \dots (s_l, t_l)$ a set of $l$ pairs of vertices 
and $c$ a function from $[l]$ to $[k]$, the \textit{$k$-DSP} problem defined by $G$, the $(s_i, t_i)$ and $c$
is the problem of finding a set of internally vertex-disjoint paths $P_1, \dots, P_l$ such that for any $i \in [l]$, 
$P_i$ is a path of colour $c(i)$ between $s_i$ and $t_i$. The $(s_i,t_i)$ will be referred to as \textit{requests}. The following lemma shows that we can reduce 
Eilam-Tzoreff's question to solving an instance of $k$-DSP.

\begin{lemma}\label{lem:from_DSP_to_shortest_graph}
    Let $G$ be a graph and  $(s_1, t_1), \dots, (s_k, t_k)$ be a set of $k$ pairs of vertices in $G$. 
    Let $G'$ be the $k$-shortest graph obtained from $G$ by taking for each $i$ $(V^i_1, \dots, V^i_{l_i})$
    the partition obtained by doing a breath-first-search from $s_i$, and removing the edges which 
    are not between two consecutive levels of some BFS. There exists a set of internally vertex-disjoint paths 
    $P_1, \dots, P_k$ such that for every $i \in [k]$, $P_i$ is a shortest path between $s_i$ and 
    $t_i$ if and only if the $k$-DSP problem defined by $G'$, the $(s_i, t_i)$ and the identity function 
    $c:[k] \rightarrow [k]$ has a solution. 
\end{lemma}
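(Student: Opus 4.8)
The plan is to show that the two problems are equivalent because, for every index $i$, the $(s_i,t_i)$-paths of colour $i$ in $G'$ are precisely the shortest $(s_i,t_i)$-paths of $G$. Once this correspondence is established the equivalence follows at once, since a solution of either problem is then literally the same collection of paths, so internal vertex-disjointness carries over verbatim.

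First I would check that $G'$ really is a $k$-shortest graph with respect to the $k$ BFS-partitions, so that the notion of ``path of colour $i$'' is well defined. The first defining condition holds by construction: we deleted exactly those edges not lying between two consecutive levels of some BFS, so every surviving edge is between consecutive levels of at least one partition. The second condition is the standard BFS property: in a breadth-first search from $s_i$, an edge of $G$ can only join vertices in the same level or in two consecutive levels, so for every $i$ and every edge $xy$ of $G$ (a fortiori of $G'$) the endpoints lie in levels $V^i_j, V^i_l$ with $|j-l| \le 1$, which forbids edges between non-consecutive levels.

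Next I would pin down the correspondence through the BFS level of a vertex. Writing $V^i_1 = \{s_i\}$, a vertex $v$ lies in $V^i_{j}$ with $j = d_G(s_i,v)+1$. For the forward inclusion, let $P$ be a shortest $(s_i,t_i)$-path of $G$; along $P$ the level from $s_i$ increases by exactly one at each edge, so every edge of $P$ is a colour-$i$ edge, survives in $G'$, and is oriented in agreement with $\le_i$; hence $P$ is an $(s_i,t_i)$-path of colour $i$ in $G'$. For the reverse inclusion, let $Q$ be an $(s_i,t_i)$-path of colour $i$ in $G'$. Its levels increase by exactly one at each step, so its length equals the level difference $d_G(s_i,t_i)$; as $Q$ uses only edges of $G$, it is an $(s_i,t_i)$-path of length $d_G(s_i,t_i)$ in $G$ and therefore a shortest such path. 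Equivalently, since $G' \subseteq G$ we have $d_{G'}(s_i,t_i) \ge d_G(s_i,t_i)$, while the forward inclusion exhibits a surviving shortest path, so the two distances coincide.

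Finally, the equivalence of the two existence statements is immediate from this correspondence: any internally vertex-disjoint family of shortest $(s_i,t_i)$-paths in $G$ is, by the forward inclusion, an internally vertex-disjoint family of $(s_i,t_i)$-paths of colour $i$ in $G'$, i.e. a solution of the $k$-DSP instance with $c$ the identity; and conversely, by the reverse inclusion, any solution of that $k$-DSP instance is an internally vertex-disjoint family of shortest $(s_i,t_i)$-paths in $G$. I do not expect a genuine obstacle here: the only point needing care is the bookkeeping around the BFS-level indexing, together with the observation that deleting edges cannot create shortcuts yet preserves at least one shortest path per pair, which forces $d_G$ and $d_{G'}$ to agree on the relevant pairs.
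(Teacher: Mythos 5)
Your proof is correct and follows the same route as the paper: the paper's (much terser) argument likewise rests on the observation that $V^i_l$ is exactly the set of vertices at distance $l$ from $s_i$, so that shortest $(s_i,t_i)$-paths in $G$ coincide with $(s_i,t_i)$-paths of colour $i$ in $G'$. Your write-up simply makes explicit the level-counting and the verification that $G'$ is a $k$-shortest graph, which the paper leaves implicit.
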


\begin{proof}
    The proof follows from the fact that, for every $i$ and $l$, the set $V^i_l$ corresponds to the set of 
    vertices at distance $l$ from $s_i$. Therefore, a shortest path in $G$ between $s_i$ and some vertex $
    x\in V^i_l$ is a path of colour $i$ from $s_i$ to $x$ in $G'$ and vice versa.  
\end{proof}

The main contribution of this paper is to prove the following result, which implies Theorem \ref{thm:SDP}

\begin{theorem}\label{thm:main}
    Let $G$ be a $k$-shortest graph, $(s_1, t_1), \dots, (s_l, t_l)$ a set of $l$ pairs of vertices 
    and $c$ a function from $[l]$ to $[k]$. There exists an algorithm running in time $n^{O(l^{5^k})}$
    deciding if the problem of $k$-DSP defined by $G$, the $(s_i,t_i)$ and $c$ has a solution. 
\end{theorem}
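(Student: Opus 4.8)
The shape of the bound $n^{O(l^{5^k})}$ strongly suggests an induction on the number of colours $k$, in which peeling off a single colour multiplies the effective number of requests from $l$ to $O(l^5)$ while decreasing $k$ by one. Since the base case $k=0$ (no coloured edges, so $G$ is edgeless and the only solvable requests are those with $s_i=t_i$) is decidable in time $n^{O(l)}$, solving the $(k-1)$-colour subinstances with $O(l^5)$ requests by induction would cost $n^{O((l^5)^{5^{k-1}})}=n^{O(l^{5^k})}$, which is exactly the target. So the plan is to fix the largest colour, say $k$, and reduce a $k$-DSP instance to $n^{O(l^{5^k})}$ many instances of $(k-1)$-DSP, each with $O(l^5)$ requests; the cost of enumerating the reductions, roughly $n^{O(l^5)}$, is absorbed into the final bound.

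The structural feature I would exploit is that shortest paths are Lipschitz across layerings: if $R$ is any solution path and we record for each of its vertices the index $m$ of the colour-$k$ level $V^k_m$ containing it, then this index changes by at most one along each edge of $R$, because no edge joins two colour-$k$ levels whose indices differ by more than one. In particular the colour-$k$ paths themselves are monotone transversals, meeting each level in their range in exactly one vertex. I would use this to argue that the essential interaction between the colour-$k$ paths and the remaining paths is captured by a bounded amount of data: a \emph{frame} consisting of $\mathrm{poly}(l)$ vertices at which the surviving paths enter, leave, or turn around with respect to the colour-$k$ levels. After guessing this frame in time $n^{O(l^5)}$, I would delete the colour-$k$ paths, cut each remaining path at its frame vertices, and reinterpret the resulting pieces as new requests of colours $1,\dots,k-1$ on a modified graph $G'$ from which colour $k$ has been erased; the total number of such pieces is $O(l^5)$, which is the source of the factor $5$ in the recursion.

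The hard part will be the structural lemma underlying the frame: that one may restrict attention to solutions whose interaction with the colour-$k$ layering is describable by $\mathrm{poly}(l)$ vertices, \emph{independently of $n$}. This is exactly where the absence of planarity bites, since, unlike in the $k=2$ case of Eilam-Tzoreff, a single monotone transversal does not separate the graph, so there is no cheap topological reason for the crossing pattern to be simple. I expect the right tool to be a rerouting/uncrossing argument showing that any solution can be normalised so that between consecutive frame levels each surviving path runs monotonically, with the number of places where monotonicity is broken bounded polynomially in $l$; pinning down the correct $O(l^5)$ bound, and verifying that the cut-and-recurse operation preserves solvability in \emph{both} directions (so that a solution to the $(k-1)$-colour instances reassembles into colour-$k$ transversals that remain disjoint from everything), is the technical core on which the whole induction rests.
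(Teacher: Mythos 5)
You have correctly reverse-engineered the architecture of the argument: an induction on $k$ in which one colour class is ``peeled off'' at the cost of multiplying the number of requests from $l$ to roughly $l^5$, with the endpoints of the refined requests guessed by brute force. This matches the recursion $C(k,l)\le (7Cl)^{5^k}$ in the paper's Lemma~\ref{lem:decomp_sol}, and your observation about level indices changing by at most one along any edge is Proposition~\ref{prop:shortest_path} in disguise. However, the proposal stops exactly where the proof begins. The ``structural lemma underlying the frame'' that you defer is the entire content of Sections~3 and~4: the paper shows that the interaction of two shortest paths of different colours is governed by \emph{bi-coloured components} (connected components of edges carrying both colours with consistent orientations), that a pair of paths has at most one \emph{conflicting} such component (Lemma~\ref{lem:inter_main}), and that outside of it the paths can be cut into at most $9$ pieces each so that every cross-colour pair of pieces is \emph{blind} --- a weaker, one-sided non-interaction property defined via ``$P_i$ sees $P_j$.'' None of this is a generic rerouting/uncrossing argument; it relies on the rigidity of shortest paths (two shortest paths of colours $i$ and $j$ sharing two endpoints are each of both colours, Proposition~\ref{prop:two_path_colours}), and three common vertices already force a conflicting component (Lemma~\ref{lem:3_conflict}). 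Without some substitute for this machinery your $\mathrm{poly}(l)$ frame bound is unsubstantiated, since a priori a colour-$i$ path can weave arbitrarily often relative to the colour-$j$ layering.

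There is a second gap in your recombination step. You propose to delete colour $k$, recurse on a modified graph $G'$, and reassemble; but erasing colour-$k$ edges does nothing to prevent the recursively found paths of colours $1,\dots,k-1$ from colliding with the already-fixed colour-$k$ paths, and verifying disjointness ``in both directions'' is not a detail but the crux. The paper does \emph{not} recurse at solve time: the induction on $k$ lives entirely inside the existence proof of the decomposition (Lemma~\ref{lem:decomp_sol}), and the actual algorithm guesses all $O(l^{5^k})$ refined requests at once and runs a \emph{single} product-digraph search over all of them simultaneously (Lemma~\ref{lem:blind_case}, a Fortune--Hopcroft--Wyllie-style construction whose correctness uses blindness to argue that some path can always be advanced). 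Note also that blind pairs of requests can still admit intersecting solutions, so the subinstances are not independent even after the decomposition; your plan to solve the $(k-1)$-colour instance as a separate problem on a smaller graph would need an additional argument that the paper avoids by design.
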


In fact, by noting that a path of length at most $d(s_i,t_i) + C$ between $s_i$ and $t_i$ uses at most $C$ edges which are not edges between consecutive levels of the BFS starting in $s_i$, trying all $\binom{n^2}{kC}$ choices for these edges allows us to reduce the problem of Corollary \ref{thm:SDPC} to a $k$-DSP problem with at most $k \cdot C$ pairs. 

An interesting case is when $k=1$. The problem then reduces 
to the problem of directed disjoint paths in acyclic digraphs by orienting all the edges of $G$ from each set
$V_i$ to $V_{i+1}$. Therefore, the algorithm of Fortune et al. \cite{FORTUNE1980111} gives a solution in $n^{O(l)}$. As noted in \cite{Bang}, we can also reduce the problem of directed-disjoint paths in acyclic digraphs 
to $1$-DSP, which implies the following theorem:

\begin{theorem}\label{th:hardness}
	The $1$-DSP problem is $W[1]$-hard parameterized by the number of requests.
\end{theorem}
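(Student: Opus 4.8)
The plan is to prove hardness by a parameterized reduction \emph{from} the directed vertex-disjoint paths problem on acyclic digraphs, which is $W[1]$-hard when parameterized by the number of terminal pairs (Slivkins). The excerpt already observes that, after orienting every edge of a $1$-shortest graph from $V_j$ to $V_{j+1}$, the structure is exactly an acyclic digraph in which all arcs join consecutive levels and in which colour-$1$ paths are precisely the directed paths; so it suffices to reduce an arbitrary acyclic instance to one of this special \emph{levelled} form while preserving the number of requests, which is the reduction attributed to \cite{Bang}. First I would fix a topological ordering of the input digraph $D$ and place each vertex $v$ at level $p(v)$ equal to its position in this ordering, so that every arc $(u,w)$ satisfies $p(u)<p(w)$. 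To turn $D$ into a $1$-shortest graph I would then \emph{subdivide} every arc $(u,w)$ with $p(w)>p(u)+1$ into a path through $p(w)-p(u)-1$ fresh internal vertices placed on the intermediate levels $p(u)+1,\dots,p(w)-1$, so that in the resulting graph $G$ every edge joins two consecutive levels. The single partition $(V_1,\dots,V_n)$ by level then witnesses that $G$ is a $1$-shortest graph, and I would keep each request $(s_i,t_i)$ unchanged.

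Next I would verify that the $1$-DSP instance defined by $G$ and the identity colouring has a solution if and only if $D$ admits internally vertex-disjoint $(s_i,t_i)$-paths, with the parameter (the number of requests) unchanged. For the forward direction, a family of vertex-disjoint directed paths in $D$ maps to colour-$1$ paths in $G$ by replacing each used arc with its subdivision chain; disjointness is preserved because the internal vertices of distinct arcs are distinct and any single arc is used by at most one of the disjoint paths. For the reverse direction, a colour-$1$ path in $G$ increases the level by exactly one at each step, so contracting each subdivision chain recovers a directed path of $D$ on the original vertices it visits, and disjointness of the colour-$1$ paths forces disjointness of these recovered paths.

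The step I expect to require the most care is arguing that the correspondence is exact, i.e. that $G$ creates no spurious colour-$1$ paths. The key points here are that each subdivision vertex has a unique in- and out-neighbour, so a colour-$1$ path that enters a chain is forced to traverse it to its endpoint and can neither branch nor leave early; and that, although every colour-$1$ path between $s_i$ and $t_i$ has length exactly $p(t_i)-p(s_i)$ whereas the corresponding directed paths in $D$ may use different numbers of arcs, this causes no loss, since along any $s_i$--$t_i$ path the lengths of the subdivision chains telescope to the level difference $p(t_i)-p(s_i)$. Finally I would check that the reduction is polynomial (each arc contributes at most $n$ new vertices) and preserves the parameter, so that the $W[1]$-hardness of directed disjoint paths on acyclic digraphs transfers to $1$-DSP, completing the proof.
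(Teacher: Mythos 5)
Your proposal is correct and follows essentially the same route as the paper: a parameterized reduction from vertex-disjoint paths on acyclic digraphs (W[1]-hard by Slivkins), using a topological ordering as the level structure and subdividing each arc $(u,w)$ into $p(w)-p(u)-1$ new vertices so that every edge joins consecutive levels, whence the underlying graph is a $1$-shortest graph and colour-$1$ paths correspond exactly to directed paths of $D$. Your added verification that subdivision vertices force traversal of the whole chain and create no spurious shortest paths is a detail the paper leaves implicit, but the argument is the same.
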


\begin{proof}
	Consider an instance $D$ and $(s_1, t_1), \dots, (s_k, t_k)$ of disjoint paths in acyclic digraphs and $v_1, \dots, v_n$ a topological ordering of the vertices of $D$. Let $D'$ be the digraph obtained from $D$ by subdividing each arc $(v_i, v_j)$ $j -i - 1$ times. By doing so, every $(v_i,v_j)$-path in $D'$ now has length exactly $i-j -i$, which means that every path is a shortest path. The underlying graph $G'$ obtained by forgetting the orientation of the arcs in $D'$ is a $1$-shortest graph, as all the edges appear in the BFS starting from $v_1$, and a shortest path between $v_i$ and $v_j$ still corresponds to a $(v_i,v_j)$-path in $D$. This means that solving the $1$-DSP problem defined by $G'$ and the $(s_i, t_i)$ gives a solution to the original instance of disjoint paths in $D$, which ends the proof as disjoint paths in acyclic digraphs is $W[1]$-hard \cite{Slivkins}\footnote{The hardness proof of Slivkins, while stated for the arc-disjoint version works also for the vertex-disjoint one.}.  
\end{proof}

The rest of the paper is devoted to the proof of Theorem \ref{thm:main}. The main idea behind the proof is to reduce to a set of $O(l^{5^k})$ requests such that, roughly speaking, for each pair of requests of different colours, no pair of shortest paths solving these requests can intersect. Once we have achieved this, it means that the only potential conflicts arise for pairs of requests of the same colour. However, the edges of each colour class can be seen as an acyclic digraph, and we can adapt the algorithm of Fortune et al. to that case. The main difficulty lies in reducing to these $O(l^{5^k})$ requests. To achieve this, we need to look at a potential solution to the original $k$-DSP problem and say that, for each pair of paths of different colours in this solution, there is a way to partition each of these paths into a finite number of subpaths, such that the endpoints of each pair of subpaths now correspond to requests that can never intersect.
The next two sections are devoted to this task. In particular, it is devoted to the understanding of the structure of bi-coloured edges.

\section{Bi-coloured components}

Let $G$ be a $k$-shortest graph and $i,j$ two integers in $[k]$. Consider $G_{i,j}^+$ (resp. $G_{i,j}^-$ ), the graph induced by the edges $xy$ of $G$ of colour both $i$ and $j$ such that $x \leq_i y$ and $x \leq_j y$  (resp.  $x \leq_i y$ and $y \leq_j x$). A \textit{bi-coloured} component of colours $i,j$ is a connected component of $G_{i,j}^+$ or $G_{i,j}^-$. Note that $G_{i,j}^+$ and $G_{i,j}^-$ play identical roles, as reversing the order of the partition $(V^j_1, \dots, V^j_{l_j})$ transforms the $k$-shortest graph $G$ into a $k$-shortest graph $G'$ where every component of $G_{i,j}^-$ becomes a component of $G_{i,j}^+$ and vice-versa. Bi-coloured components will play an essential role in order to decompose a $k$-DSP problem into a set of $O(l^{5^k})$ requests such that the intersections between request of different colours behave nicely\footnote{We will define what we mean by nicely in the next section.}. The first property we need to prove is that for any path $P_i$ of colour $i$ and bi-coloured component $C$ of colour $i$ and $j$, then $P_i \cap C$ is a subpath of $P_i$. Let us first show some properties of the bi-coloured components. 

\begin{lemma}\label{lemma_diff_pos}
    Let $G$ be a $k$-shortest graph, $i,j$ two indices in $[k]$, and 
    $S$ some component of $G_{i,j}^+$.  There exists a constant $C_{S}$ such that 
    for any vertex $x \in S$, if $x \in V^i_r$, then $x \in V^j_{r + C_{S}}$.
\end{lemma}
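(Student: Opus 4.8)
The plan is to isolate a single integer-valued quantity attached to each vertex that is forced to be invariant along every edge of $G_{i,j}^+$, and then to conclude by connectivity of $S$. Concretely, since $(V^i_1,\dots,V^i_{l_i})$ is a genuine partition, every vertex $x$ lies in exactly one part, so there is a well-defined index $r_i(x)$ with $x \in V^i_{r_i(x)}$, and likewise a well-defined $r_j(x)$ with $x \in V^j_{r_j(x)}$. I would define $f(x) = r_j(x) - r_i(x)$ and prove that $f$ is constant on $S$; the constant $C_S$ in the statement is then exactly this common value, since $x\in V^i_r$ means $r=r_i(x)$ and $f(x)=C_S$ gives $r_j(x)=r+C_S$, i.e.\ $x\in V^j_{r+C_S}$.

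The heart of the argument is the claim that $f(x)=f(y)$ whenever $xy$ is an edge of $G_{i,j}^+$. By definition of $G_{i,j}^+$ such an edge can be labelled so that $x \leq_i y$ and $x \leq_j y$, and it has colour both $i$ and $j$. First I would invoke the definition of a colour-$i$ edge: its two endpoints lie in consecutive parts of the $i$-partition, so $\{r_i(x),r_i(y)\}=\{r,r+1\}$ for some $r$; combined with $x\leq_i y$ this pins down $r_i(y)=r_i(x)+1$. The identical reasoning applied to colour $j$ together with $x\leq_j y$ yields $r_j(y)=r_j(x)+1$. Subtracting, $f(y)=r_j(y)-r_i(y)=\bigl(r_j(x)+1\bigr)-\bigl(r_i(x)+1\bigr)=f(x)$, so $f$ is preserved across the edge.

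Finally, because $S$ is a connected component of $G_{i,j}^+$, any two vertices of $S$ are joined by a walk whose edges all lie in $G_{i,j}^+$; since $f$ is unchanged along each such edge, it is constant on all of $S$, and I set $C_S$ to this value. I do not anticipate a genuine obstacle here: the only step requiring care is the simultaneous-increment observation $r_i(y)=r_i(x)+1$ and $r_j(y)=r_j(x)+1$, which is where the hypothesis that the edge is bi-coloured and positively oriented in \emph{both} orders (as opposed to $G_{i,j}^-$, where one would instead track $r_i+r_j$) is used in full.
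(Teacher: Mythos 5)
Your proof is correct and follows essentially the same route as the paper's: the paper fixes a base vertex, walks along a path of $G_{i,j}^+$ to any other vertex of $S$, and observes that the net displacement in the $i$-partition equals the net displacement in the $j$-partition because the two orders agree on every edge of $G_{i,j}^+$; your edge-by-edge invariance of $f(x)=r_j(x)-r_i(x)$ is the same observation, just stated locally rather than summed over the path.
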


\begin{proof}
    Let $x$ be any vertex belonging to $S$. Let $r$ and $t$ be the constants such that 
    $ x \in V^i_r$ and $x \in V^j_t$ and define $C_S = t-r$. Let $y$ be another vertex of $S$. 
    By definition of $S$, there exists a path $P$ in $G_{i,j}^+$ between $x$ and $y$. Let $s_1$ 
    be the number of edges of $P$ which are used positively for the order induced by the colour
    $i$ when going from $x$ to $y$, and $s_2$ the number of edges used negatively. By definition of 
    this order, we have that $y \in V^i_{r + s_1 - s_2}$. 

    Because the orders induced by the colours $i$ and $j$ are the same on
    $S$, we also have that $y \in V^j_{t + s_1 - s_2}$, which ends the proof. 
\end{proof}

Let us now show the following properties of paths of colour $i$.

\begin{proposition}\label{prop:shortest_path}
    Let $G$ be a $k$-shortest graph. Suppose $x \in V^i_r$ and $y \in V^i_t$ for some $i \in [k]$ 
    and $r,t \in [l_i]$ with $r > t+1$. If there exists a path in $G$ of length $r-t$ between $x$ and
    $y$, then this path is a path of colour $i$ from $y$ to $x$. 
\end{proposition}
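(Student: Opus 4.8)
The plan is to track, along the given path, the colour-$i$ level of each vertex and show it must increase by exactly one at every step. I would write the path as $P = z_0, z_1, \dots, z_m$ with $z_0 = y$, $z_m = x$, and $m = r - t$ its length, and for each index $p$ let $a_p$ denote the integer with $z_p \in V^i_{a_p}$; thus $a_0 = t$ and $a_m = r$.

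The first step is to bound how much a single edge can change the level. Since $z_p z_{p+1}$ is an edge of $G$, the defining clause of a $k$-shortest graph stating that no edge joins two vertices whose colour-$i$ indices differ by more than one gives $|a_{p+1} - a_p| \leq 1$, so each increment $a_{p+1} - a_p$ lies in $\{-1, 0, 1\}$.

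Next I would invoke a telescoping argument: the sum $\sum_{p=0}^{m-1} (a_{p+1} - a_p)$ equals $a_m - a_0 = r - t = m$. As this is a sum of $m$ terms, each at most $1$, that nonetheless totals $m$, every term must equal exactly $1$. Hence $a_{p+1} = a_p + 1$ for all $p$, which means each edge $z_p z_{p+1}$ joins two consecutive colour-$i$ levels and is therefore an edge of colour $i$ with $z_p \leq_i z_{p+1}$. By definition this makes $P$ a path of colour $i$, and since it runs from $z_0 = y \in V^i_t$ up to $z_m = x \in V^i_r$ with $t < r$, the orientation convention makes it a path of colour $i$ from $y$ to $x$, as required.

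I do not anticipate any genuine obstacle here. The only points demanding care are that the per-edge bound uses the ``no long edges'' clause of the definition rather than the ``consecutive levels'' clause, and that the hypothesis $r > t+1$ (which guarantees $x$ and $y$ are themselves non-adjacent and $m \geq 2$) is not actually exploited by the argument, the same reasoning being valid whenever $r > t$. At bottom the proof is simply the observation that a walk whose per-step displacement is bounded by $1$ and whose length equals its net displacement has no slack, so every step attains the maximum.
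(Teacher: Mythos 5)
Your proof is correct and follows essentially the same route as the paper: parametrize the path, note that each edge changes the colour-$i$ level by at most one (the paper only uses the one-sided bound $i_j \leq i_{j-1}+1$, but the idea is identical), and conclude by telescoping that every step increases the level by exactly one, forcing every edge to be of colour $i$ with the right orientation. Your side remark that only $r > t$ is needed is accurate but immaterial.
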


\begin{proof}
Let $P= x_1, \dots ,  x_s$ with $x_1 = y$, $x_s = x$ and $s = r-t +1$ be a path of 
length $r-t$ between $x$ and $y$. For every $j \in [s]$, let $i_j$ be the integer such 
that $x_j \in  V^i_{i_j}$. We know that for any $j \in [2..s]$, $i_j \leq i_{j-1} + 1$ as 
$x_j$ and $x_{j-1}$ are adjacent. However, $i_1 = t$, $i_s = r$ and $s = r-t +1$. This means 
that $i_j = i_{j-1} + 1$ for every $j \in [2..s]$ and all the edges of $P$ are edges of colour $i$.
\end{proof}

\begin{proposition}\label{prop:two_path_colours}
    Let $G$ be a $k$-shortest graph, $i,j$ two indices of $[k]$ and $x$ and $y$ two vertices of $G$.
    If there exists a path $P_i$ of colour $i$ between $x$ and $y$ and a path $P_j$ of colour $j$ 
    between $x$ and $y$, then $P_j$ is also a path of colour $i$ and $P_i$ is also a path of colour 
    $j$. 
\end{proposition}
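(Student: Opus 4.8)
The plan is to lean on two facts already recorded in the preliminaries: every path of colour $i$ is a shortest path in $G$, and along a path of colour $i$ each edge raises the $i$-level by exactly one. Together these pin down lengths in terms of levels. Write $x \in V^i_{r_x}$ and $y \in V^i_{r_y}$ and assume, reversing the paths if necessary, that $r_x \le r_y$. Since $P_i$ is a path of colour $i$ from the lower to the higher $i$-level, every edge increases the $i$-level by one, so $P_i$ has length exactly $r_y - r_x$; being a shortest path, this gives $d(x,y) = r_y - r_x$. As $P_j$ is a path of colour $j$ it is likewise a shortest path, hence it too has length $d(x,y) = r_y - r_x$.

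The core step is then to feed $P_j$ into Proposition \ref{prop:shortest_path}. We have a path of length $r_y - r_x$ between two vertices whose $i$-levels are $r_x$ and $r_y$. When $r_y - r_x \ge 2$, Proposition \ref{prop:shortest_path} applies verbatim and yields that $P_j$ is a path of colour $i$, oriented from $x$ to $y$. The two small cases are immediate and require no simplicity assumption: if $r_y - r_x = 0$ then $d(x,y)=0$, so $x=y$ and both paths are empty; if $r_y - r_x = 1$ then $P_j$ is a single edge joining $x \in V^i_{r_x}$ to $y \in V^i_{r_x+1}$, i.e.\ an edge between consecutive $i$-levels, which is by definition an edge of colour $i$ with $x \leq_i y$, so $P_j$ is again a path of colour $i$. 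In all cases $P_j$ is a path of colour $i$.

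Finally I would run the identical argument with the roles of $i$ and $j$ interchanged. Now $P_j$, being a path of colour $j$, forces $d(x,y)$ to equal the difference of the $j$-levels of $x$ and $y$, and $P_i$, as a shortest path of that length between vertices whose $j$-levels differ by that amount, is likewise forced by the same level-counting to be a path of colour $j$. There is no serious obstacle here; the whole argument is driven by the length identity $d(x,y) = r_y - r_x$. The only point needing a moment of care is the boundary case $d(x,y) \le 1$, where Proposition \ref{prop:shortest_path} does not literally apply and one checks colour directly from the definition of the edge colouring, as above.
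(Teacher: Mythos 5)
Your proof is correct and follows essentially the same route as the paper: both paths are shortest paths of the same length, and Proposition \ref{prop:shortest_path} then forces each to carry the other's colour. The only difference is that you explicitly handle the boundary cases $d(x,y)\le 1$ excluded by the hypothesis $r>t+1$ of Proposition \ref{prop:shortest_path}, which the paper's one-line proof passes over silently; this is a reasonable bit of extra care, not a different argument.
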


\begin{proof}
    We know that $P_i$ and $P_j$ are shortest paths between $x$ and $y$, and in particular have the same 
    length. The result follows by applying Proposition \ref{prop:shortest_path} to the paths $P_j$ and 
    $P_i$. 
\end{proof}

We are ready to prove the following lemma, which shows how paths of colour $i$ interact with 
$G_{i,j}^+$.

\begin{lemma}\label{lem:compo_inter_subpath}
    Let $G$ be a $k$-shortest graph, $i,j$ two indices in $[k]$, $P_i$ a path of colour $i$ and 
    $S$ some bi-coloured component of colours $i,j$. The intersection of $P_i$ and $S$ 
    is a subpath of $P_i$. 
\end{lemma}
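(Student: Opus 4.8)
The plan is to prove the stronger statement that the vertices of $P_i$ lying in $S$ form an interval along $P_i$: whenever two vertices of $P_i$ belong to $S$, the entire subpath of $P_i$ between them is contained in $S$ (both its vertices and its edges). By the symmetry remark — reversing the order of $(V^j_1, \dots, V^j_{l_j})$ turns every component of $G_{i,j}^-$ into a component of $G_{i,j}^+$ and leaves the colour $i$, hence the notion of a path of colour $i$, untouched — I may assume that $S$ is a component of $G_{i,j}^+$. The first step is then to invoke Lemma \ref{lemma_diff_pos} to fix the constant $C_S$ such that every vertex $x \in S$ with $x \in V^i_r$ satisfies $x \in V^j_{r + C_S}$. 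This is the structural fact that makes the $i$-order and the $j$-order run parallel on $S$, and it is what the whole argument rests on.

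Next I would take two vertices $x, y$ of $P_i$ with $x, y \in S$, say with $x$ before $y$ along $P_i$, and write $x \in V^i_r$, $y \in V^i_{r'}$. Because $P_i$ is a path of colour $i$, its $i$-levels increase by exactly one at each step, so $r' - r$ equals both the length of $P_i[x,y]$ and, via $C_S$, the gap $(r' + C_S) - (r + C_S)$ between the $j$-levels of $x$ and $y$. When $r' - r \geq 2$, Proposition \ref{prop:shortest_path} applied to the colour $j$ shows that $P_i[x,y]$, having length exactly this $j$-level gap, is itself a path of colour $j$ oriented from $x$ (lower $j$-level) to $y$; when $r' - r = 1$ the subpath is the single edge $xy$, whose endpoints already sit in consecutive $j$-levels, so it too is of colour $j$. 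In either case every edge of $P_i[x,y]$ is of colour both $i$ and $j$ and is traversed increasingly for both orders, hence lies in $G_{i,j}^+$. Since $x \in S$ and $S$ is a connected component of $G_{i,j}^+$, the whole path $P_i[x,y]$ lies in $S$. Applying this to the first and last vertices of $P_i$ that belong to $S$ identifies $P_i \cap S$ with a single subpath of $P_i$, which is the claim.

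The one point demanding care — and the main obstacle — is the orientation bookkeeping: I must verify that the common offset $C_S$ forces the $j$-levels to \emph{increase} (rather than decrease) along $P_i[x,y]$, so that each edge lands in $G_{i,j}^+$ and not in $G_{i,j}^-$. This is precisely where the fixed-constant conclusion of Lemma \ref{lemma_diff_pos} is essential: without the guarantee that all vertices of $S$ share the same offset, two $S$-vertices of $P_i$ could a priori have $j$-levels differing by something other than their $i$-level difference, and the subpath between them would fail to be a colour $j$ path. Once this alignment is in hand, Proposition \ref{prop:shortest_path} does the rest, and the reduction from the $G_{i,j}^-$ case handled at the start completes the proof.
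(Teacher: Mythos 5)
Your proof is correct and takes essentially the same route as the paper's: both reduce to the $G_{i,j}^+$ case, invoke Lemma \ref{lemma_diff_pos} to align the $i$- and $j$-levels on $S$, and apply Proposition \ref{prop:shortest_path} to conclude that the subpath of $P_i$ between two vertices of $S$ is also a path of colour $j$, hence consists of edges of $G_{i,j}^+$ and lies in $S$. The only differences are presentational — you argue directly where the paper argues by contradiction, and you explicitly handle the length-one edge case and the orientation bookkeeping, which the paper leaves implicit.
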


\begin{proof}
    Let $S$ be a component of $G_{i,j}^+$ and suppose that $P_i$ does not intersect $S$ along a single 
    subpath. This means that we can find a subpath $P'$ of $P_i$ of colour $i$ between two vertices $x$, $y$
    of $S$ such that $P'$ uses no edge of $S$. Suppose $x$ is the first endpoint of 
    this path, $y$ the last and $l$ denote the length of $P'$. Then $x \in V^i_r$ and 
    $y \in V^i_{r+ l}$. However, by Lemma \ref{lemma_diff_pos}, we know that there exists a constant 
    $C_S$ such that, since both $x$ and $y$ belong to $S$, $x \in V^j_{r + C_S}$ and 
    $y \in V^j_{r+ l + C_S}$.  By Proposition \ref{prop:shortest_path}, this implies that
    $P'$ is also a path of colour $j$, and thus $P' \in S$.
    The case where $S$ is a component of $G_{i,j}^-$ is very similar and thus omitted. 
\end{proof}

Let $G$ be a $k$-shortest graph, $i,j$ two indices in $[k]$, $P_i$ a path of colour $i$ and $P_j$ 
a path of colour $j$. We say that $P_i$ and $P_j$ are in \textit{conflict} if there exists a bi-coloured
component $S$ of colour $i,j$ such that the intersection of $P_i$ with $S$ is a $(s_1',t_1')$-path 
and the intersection of $P_j$ with $S$ is an $(s_2', t_2')$-path 
for some $s_1', s_2', t_1', t_2' \in S$,
with the property that there exists a $(s_1',t_1')$-path $P_i'$ of colour $i$ and an 
$(s_2', t_2')$-path $P_j'$ of colour $j$ using at least one vertex outside of $s_1', s_2', t_1', t_2'$ in common. 
The component $S$ will be called a 
\textit{conflicting component} for $P_i$ and $P_j$. By convention, two paths of the same colour are 
never conflicting. 
The following lemma is the main ingredient of our Algorithm. It shows that for two paths of colour $i$ 
and $j$, there is at most one conflicting component. 

\begin{lemma}\label{lem:inter_main}
    Let $G$ be a $k$-shortest graph, $i,j$ two indices in $[k]$, $P_i$ a path of colour $i$ 
    and $P_j$ a path of colour $j$. Suppose $S$ is a conflicting component for the paths $P_i$ and 
    $P_j$, then $P_i$ and $P_j$ do not have any vertex in common outside $S$. 
\end{lemma}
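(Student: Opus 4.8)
\emph{Proof plan.} The plan is to argue by contradiction: assuming $P_i$ and $P_j$ share a vertex $w\notin S$, I will compute the distance from $w$ to the ``conflict vertex'' $z$ along two different routes and reach a contradiction. Throughout, for a vertex $v$ write $a(v)$ for the index with $v\in V^i_{a(v)}$ and $b(v)$ for the index with $v\in V^j_{b(v)}$. By the symmetry remarked just after the definition of $G_{i,j}^\pm$ (reversing the order of $(V^j_1,\dots,V^j_{l_j})$), I may assume $S$ is a component of $G_{i,j}^+$, so by Lemma~\ref{lemma_diff_pos} there is a constant $C_S$ with $b(v)=a(v)+C_S$ for all $v\in S$; moreover each edge of $S$, traversed in the direction of $\leq_i$, increases both $a$ and $b$ by exactly $1$. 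Since $P_i$ has colour $i$, along each of its edges $a$ increases by exactly $1$, while $b$ changes by at most $1$ (adjacent vertices differ by at most $1$ in every partition); symmetrically $b$ increases by exactly $1$ along $P_j$. Set $f(v)=b(v)-a(v)$, so $f\equiv C_S$ on $S$.

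First I would locate $w$ relative to the subpaths $P_i\cap S$ and $P_j\cap S$, which are single subpaths by Lemma~\ref{lem:compo_inter_subpath}; write them as the $(s_1',t_1')$- and $(s_2',t_2')$-subpaths. The preliminary claim is: if $w$ lies on $P_i$ with $w\notin S$, then $f(w)>C_S$ when $w$ precedes $s_1'$ and $f(w)<C_S$ when $w$ follows $t_1'$; symmetrically on $P_j$, $f(w)<C_S$ before $s_2'$ and $f(w)>C_S$ after $t_2'$. For the first case I compare along $P_i[w,s_1']$: here $a$ increases by exactly the length of this subpath while $b$ increases by at most that amount, giving $f(w)\ge C_S$; and if equality held, every edge of $P_i[w,s_1']$ would raise both $a$ and $b$ by $1$, hence lie in $G_{i,j}^+$, placing $w$ in the component $S$ --- a contradiction. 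The other three cases are identical. As $f(w)$ is a single number, this rules out $w$ preceding $S$ on both paths, or following $S$ on both, leaving exactly two symmetric cases: (B) $w$ precedes $s_1'$ on $P_i$ and follows $t_2'$ on $P_j$, and (C) its mirror image.

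The heart of the argument, and the step I expect to be the main obstacle, is ruling out case (B) using the conflict. Let $z$ be the common vertex of an $(s_1',t_1')$-path $P_i'$ of colour $i$ and an $(s_2',t_2')$-path $P_j'$ of colour $j$ provided by the definition of a conflicting component. Since $z\notin\{s_1',t_1',s_2',t_2'\}$ and both endpoints of $P_i'$ (resp.\ $P_j'$) lie in $S$, Lemma~\ref{lem:compo_inter_subpath} forces $P_i',P_j'\subseteq S$, so $z\in S$ with $a(s_1')<a(z)<a(t_1')$ and $a(s_2')<a(z)<a(t_2')$. I then form the walk $R=P_i[w,s_1']\cdot P_i'[s_1',z]$ from $w$ to $z$, of length $\big(a(s_1')-a(w)\big)+\big(a(z)-a(s_1')\big)=a(z)-a(w)$; since $a$ is $1$-Lipschitz for the graph distance, $d(w,z)=a(z)-a(w)$, so $R$ is a geodesic. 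Running $P_j[t_2',w]$ backwards and then $P_j'[z,t_2']$ backwards gives a $w$--$z$ walk of length $\big(b(w)-b(t_2')\big)+\big(b(t_2')-b(z)\big)=b(w)-b(z)$, which likewise matches the lower bound $d(w,z)\ge b(w)-b(z)$, so $d(w,z)=b(w)-b(z)$ as well.

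Finally I would extract the contradiction. Along the geodesic $R$ the coordinate $b$ changes from $b(w)$ to $b(z)$, a total decrease of $b(w)-b(z)=d(w,z)=\text{length}(R)$; as $b$ moves by at most $1$ per edge, every edge of $R$ must decrease $b$ by exactly $1$. But the terminal segment $P_i'[s_1',z]$ of $R$ is a nonempty subpath lying in $S$ and directed along $\leq_i$ (because $a(z)>a(s_1')$), so each of its edges \emph{increases} $b$ by $1$ --- a contradiction. Case (C) follows by the identical computation after swapping the roles of ``before/after'' and of $a$ and $b$. Hence $P_i$ and $P_j$ have no common vertex outside $S$.
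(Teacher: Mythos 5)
Your proof is correct and takes essentially the same route as the paper: in the decisive mixed case you build two shortest walks from the common vertex $w$ to the conflict vertex (one via $s_1'$ along $P_i$ and $P_i'$, the other via $t_2'$ along $P_j$ and $P_j'$) and extract a length contradiction from the BFS-level functions, which is exactly the paper's computation $|H_1|=|P_j[t_2',z]|$ versus $|P_i[z,s_1']P_1|=|P_2P_j[t_2',z]|$ in different notation. The only real divergence is a pleasant simplification: your preliminary claim that $f(w)>C_S$ before $S$ and $f(w)<C_S$ after $S$ (and the reverse on $P_j$) kills the both-before and both-after cases without invoking the conflict vertex at all, whereas the paper also routes those cases through the conflicting component.
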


\begin{proof}
    Again, we can assume that $S$ is a component of $G_{i,j}^+$ by potentially reversing the order $i$.
    Suppose that the intersection of $P_i$ with $S$ is an $(s_1',t_1')$-path and 
    the intersection of $P_j$ with $S$ is an $(s_2', t_2')$-path for some $s_1', s_2', t_1', t_2' \in S$.
    We prove the lemma by contradiction, distinguishing several cases depending on 
    which part of $P_i$ and $P_j$ (compared to $S$) the intersection lies on.

    Suppose first that the intersection of $P_i$ and $P_j$ lies after $S$ for both paths. 
    By definition of conflicting components, we know that 
    there exists a vertex $x \in S$ such that there exists a path $P_1$ of colour 
    $i$ from $x$ to $t_1'$ and a path $P_2$ of colour $j$ from $x$ to $t_2'$. Let $z$ be the first vertex 
    belonging to the intersection of $P_i \cap P_j$ after $S$. By applying Proposition \ref{prop:two_path_colours}
    to the path obtained by concatenating $P_1$ and $P_i[t_1', z]$ and the one obtained by concatenating
    $P_2$ and $P_j[t_2',z]$, we get that these two paths are both of colour $i$ and $j$. 
    In particular this implies that the edges of these paths belong to $G_{i,j}^+$ and $z \in S$,
    which is a contradiction. 

    Suppose now that the intersection of $P_i$ and $P_j$ lies before $S$ on $P_i$ and after $S$ 
    on $P_j$. By definition of conflicting components, we know that 
    there exists a vertex $x \in S \setminus \{ s_1', s_2', t_1', t_2' \}$ such that there exists a path $P_1$ of colour 
    $i$ from $s_1'$ to $x$ and a path $P_2$ of colour $j$ from $x$ to $t_2'$. 
    Because $x \not \in \setminus \{ s_1', s_2', t_1', t_2' \}$, 
    the two paths $P_1$ and $P_2$ have both length at least $1$. Moreover, 
    they are both paths of colour $i$ and $j$, and with the same orientation associated to these colours. 
    Let $z$ denote the first vertex in the intersection of $P_i$ and $P_j$ before $S$ on $P_i$ and after $S$ on $P_j$ 
    and consider the path $H_1 = P_i[z, s_1']P_1P_2$. $H_1$ is a path of colour $i$ between $z$ and 
    $t_2'$, and thus $|H_1| = |P_j[t_2',z]|$. Likewise, we can show that 
    $|P_i[z,s_1']P_1| = |P_2P_j[t_2', z]|$, which gives us a contradiction. 
    
    The other cases are symmetrical. 
\end{proof}

Let us now explain how the previous lemma will be used. Remember that our goal is to reduce an original instance of $k$-DSP with $l$ requests to one with $O(l^{5^k})$ requests such that for every pair of requests of different colours, no pair of shortest paths solving these requests can intersect. Suppose $P_i$ and $P_j$ are two paths of different colours in a solution of the original $k$-DSP which are in conflict. Let $S$ denote the conflicting component for $P_i$ and $P_j$. Because of Lemma \ref{lem:compo_inter_subpath}, we know that the intersection of $P_i$ and $P_j$ with $S$ are subpaths. For every $a \in \{i, j\}$, consider the path partition $(P^1_a, P^2_a, P^3_a)$ of $P_a$, where $P^2_a$ is the subpath of $P_a$ on $S$, $P^1_a$ the part of $P_a$ before this component, and $P^3_a$ the part after. What Lemma \ref{lem:inter_main} roughly says is that the endpoints of $P^1_i, P^1_j, P^3_i$ and $P^3_j$ correspond to requests such that no pair of shortest paths solving these requests can intersect, which is exactly what we wanted. This is not true for the requests associated to $P^2_j$ and $P^2_i$, however since they both belong to a bi-coloured component $S$, these two requests can be considered of the same colour. 

Surprisingly, the case where $P_i$ and $P_j$ are not in conflict is harder to handle. This is the goal of the next section, but let us first show sufficient conditions to guarantee the existence of a conflicting component for a pair of paths. 

\begin{lemma}\label{lem:3_conflict}
Let $G$ be a $[k]$-shortest graph and $i,j$ two different indices in $[k]$, $P_i$ a path of colour $i$ 
and $P_j$ a path of colour $j$. If $P_i$ and $P_j$ have three common vertices, then they have a 
conflicting component.
\end{lemma}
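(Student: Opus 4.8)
The plan is to track, for every vertex $v$, its level $\alpha(v)$ in the colour-$i$ partition (so $v \in V^i_{\alpha(v)}$) and its level $\beta(v)$ in the colour-$j$ partition. Along a path of colour $i$ the value $\alpha$ strictly increases (by exactly one per edge), and along a path of colour $j$ the value $\beta$ strictly increases. The first observation I would record is that any two common vertices of $P_i$ and $P_j$ lie in a single bi-coloured component: if $x,y \in V(P_i) \cap V(P_j)$, then the subpath of $P_i$ between $x$ and $y$ is a path of colour $i$ and the subpath of $P_j$ between them is a path of colour $j$, so Proposition \ref{prop:two_path_colours} shows that the subpath of $P_i$ is simultaneously of colour $i$ and $j$; in particular each of its edges is bi-coloured and hence lies in $G_{i,j}^+$ or $G_{i,j}^-$.

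For the main step I would take three common vertices and order them as $u_1, u_2, u_3$ along $P_i$, so that $\alpha(u_1) < \alpha(u_2) < \alpha(u_3)$. Applying the first observation to the pair $u_1, u_3$, the subpath $P_i[u_1,u_3]$ is a path of both colours; being a path of colour $j$, its value $\beta$ is strictly monotone along it, so every edge of $P_i[u_1,u_3]$ increases $\alpha$ and changes $\beta$ in the same direction. Hence either all these edges belong to $G_{i,j}^+$ or all belong to $G_{i,j}^-$, which places $u_1, u_2, u_3$ in one and the same bi-coloured component $S$. This is the crux of the argument and the main obstacle: a priori the pairs $\{u_1,u_2\}$ and $\{u_2,u_3\}$ could sit in a $G_{i,j}^+$ component and a $G_{i,j}^-$ component respectively, and what rules this out is precisely the monotonicity of $\beta$ forced by Proposition \ref{prop:two_path_colours}.

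Finally I would exhibit $S$ as a conflicting component. By Lemma \ref{lem:compo_inter_subpath} the intersections $P_i \cap S$ and $P_j \cap S$ are subpaths; let $(s_1',t_1')$ and $(s_2',t_2')$ be their endpoints, taken in the colour-$i$ and colour-$j$ orientations respectively. Since $\alpha$ increases along $P_i$, the endpoints $s_1', t_1'$ realise the extreme $\alpha$-values on $P_i \cap S$, and as $\alpha(s_1') \le \alpha(u_1) < \alpha(u_2) < \alpha(u_3) \le \alpha(t_1')$ the middle vertex $u_2$ is distinct from $s_1'$ and $t_1'$. The identical argument with $\beta$ and $P_j \cap S$ (recall $\beta(u_2)$ lies strictly between $\beta(u_1)$ and $\beta(u_3)$, which are strictly ordered because $\beta$ is strictly monotone on the colour-$j$ path $P_i[u_1,u_3]$) shows $u_2 \neq s_2', t_2'$. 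Thus $u_2 \notin \{s_1',t_1',s_2',t_2'\}$, while $u_2$ is a common vertex of the colour-$i$ path $P_i \cap S$ and the colour-$j$ path $P_j \cap S$. Taking these two subpaths as the witnessing paths $P_i'$ and $P_j'$ in the definition of conflict shows that $S$ is a conflicting component for $P_i$ and $P_j$, as required.
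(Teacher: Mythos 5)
Your proof is correct and follows essentially the same route as the paper: force all three common vertices into a single bi-coloured component via Proposition \ref{prop:two_path_colours}, then exhibit the middle vertex as an interior common vertex of the two intersection subpaths (which are subpaths by Lemma \ref{lem:compo_inter_subpath}) to witness the conflict. The only difference is cosmetic but pleasant: by applying Proposition \ref{prop:two_path_colours} to the outermost pair along $P_i$ and using monotonicity of the two level functions, you avoid the paper's case analysis on where the third vertex falls on $P_j$ and its accompanying length-contradiction argument, and you also make explicit why the middle vertex is distinct from the four endpoints.
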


\begin{proof}
    Let $x_1$, $x_2$ and $x_3$ be three vertices in $P_i \cap P_j$. We claim that they belong to 
    the same bi-coloured component.
    Indeed, consider the subpaths of $P_i$ and $P_j$ between $x_1$ and $x_2$. By Proposition \ref{prop:two_path_colours}
    they are both paths of colour $i$ and $j$ and belong to the same component $S$. 
    Without loss of generality, suppose $S$ is a component of $G_{i,j}^+$ and $x_1 \leq_i x_2$. 
    If $x_3$ belongs to $P_i[x_1, x_2]$ or $P_j[x_1, x_2]$, we have that $x_3 \in S$, which ends the 
    proof of the claim. Assume now $x_3$ appears after $x_2$ on $P_i$, the other case being symmetrical. 
    If it appears after $x_2$ on $P_j$, then the same argument shows that $P_i[x_2,x_3]$ is also 
    a path of $S$. 
    
    Suppose now that $x_3$ appears before $x_1$ on $P_j$. In that case we have that $P_j[x_3,x_2]$ 
    and $P_i[x_2, x_3]$ are both shortest path, and thus have the same size. 
    However, this implies that $P_j[x_3,x_1]$ is strictly shorter than $P_i[x_1,x_3]$, which is a 
    contradiction. 

    Now that we know that  $x_1$, $x_2$ and $x_3$ belong to the same bi-coloured component, let us 
    show that this component is a conflicting component for $P_i$ and $P_j$.
    Indeed, since $x_1, x_2$ and $x_3$ belong to the same component, they either appear in the same order 
    on the paths $P_i$ and $P_j$ if $S$ is a component of $G_{i,j}^+$ or in reverse order
    if $S$ is a component of $G_{i,j}^-$. In both cases, the vertex in the middle is the same 
    in both paths, and this implies that $P_i$ and $P_j$ are conflicting on this component. 
\end{proof}

\section{Blind Paths}

As explained earlier, if $P_i$ and $P_j$ are two paths of the solution of some $k$-DSP problem which are in conflict, then Lemma \ref{lem:inter_main} allows us to show that the paths $P_i$ and $P_j$ can be decomposed into a finite set of requests such that each pair of requests of different colours are without any possible intersection, meaning that for any shortest path $P$ solving the first request and $P'$ solving the second request, $P \cap P' = \emptyset$. Moreover, finding these decompositions only requires to guess the conflicting component and the intersection of $P_i$ and $P_j$. Unfortunately, it is not true that any positive instance of $k$-DSP has a solution where every pair of paths of different colours are in conflict. For this purpose we need the definition of blind pair of paths. 

Let $P_i$ be some $(s_i, t_i)$-path of colour $i$ and $P_j$ some 
$(s_j, t_j)$-path of colour $j$ which are internally vertex-disjoint. 
We say that $P_i$ \textit{sees} $P_j$ if there exists an internal vertex $x$ 
of $P_i$ such that there exists a path of colour $i$ from $x$ to $t_i$ which intersects 
$P_j \setminus \{s_j, t_j\}$. 
We say that the pair $P_i$ and $P_j$ is \textit{blind} if $P_j$ does not see $P_i$ and $P_i$ does 
not see $P_j$.

Note that if $|P_i| = 2$, then $P_i$ does not see, or is not seen, by any other path $P_j$. Note also that, if $P_i$ and $P_j$ are blind, then it is possible to find a $(s_j, t_j)$-path of colour $j$ $P'_j$ and a $(s_i, t_i)$-path of colour $i$ $P'_i$ such that $P'_i \cap P'_j \not = \emptyset$. In that sense, blind paths is a weaker notion than what we could obtain from Lemma \ref{lem:inter_main} outside of the conflicting component.

The following lemma shows how to use Lemma \ref{lem:inter_main} to find blind paths from conflicting paths.

\begin{lemma}\label{lem:blind_1}
    Let $G$ be a $k$-shortest graph, $i,j$ two integers in $[k]$, $P_i$ a path of colour $i$ 
    and $P_j$ a path of colour $j$ which are internally vertex-disjoint. 
    There exists a path partition $L_i$ of $P_i$ and a 
    path partition $L_j$ of $P_j$, both of size at most $9$ with the following properties:
    \begin{itemize} 
        \item All the paths of $L_a$ are paths of colour $a$ for $a \in \{i, j\}$
        \item For any pair of paths $H_i \in L_i$ and $H_j \in L_j$, then either $H_i$ and 
        $H_j$ are paths of the same bi-coloured component of colour $i,j$ or $H_i$ does not see 
        $H_j$.
    \end{itemize}
\end{lemma}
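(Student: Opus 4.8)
The plan is to construct the path partitions $L_i$ and $L_j$ by first identifying all the "dangerous" interactions between $P_i$ and $P_j$, and then cutting both paths at a bounded number of carefully chosen points so that any pair of resulting subpaths is either contained in a common bi-coloured component or is mutually blind. The key structural tool is Lemma \ref{lem:inter_main}, which tells us that a conflicting component, if it exists, completely localizes the intersection: outside such a component the two paths are vertex-disjoint. The definition of "sees" is asymmetric and involves \emph{extending} an initial segment of one path by a path of the same colour until it meets the interior of the other path, so the natural strategy is to track, for each path, the earliest and latest internal vertices that are "responsible" for seeing the other path.

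\textbf{Step 1: reduce to the case of few common vertices.} First I would apply Lemma \ref{lem:3_conflict}: if $P_i$ and $P_j$ have at least three common vertices, they admit a conflicting component $S$. By Lemma \ref{lem:compo_inter_subpath}, $P_i\cap S$ and $P_j\cap S$ are subpaths, so I can write $P_a = P_a^1 P_a^2 P_a^3$ for $a\in\{i,j\}$, where $P_a^2 = P_a\cap S$. This is a three-piece partition of each path. By Lemma \ref{lem:inter_main}, $P_i$ and $P_j$ have no common vertex outside $S$, so the "before" and "after" pieces $P_a^1,P_a^3$ are already vertex-disjoint from the corresponding pieces of the other path; I then argue that vertex-disjointness of the relevant extended paths forces the non-middle pieces to be mutually blind, while the middle pieces $P_i^2,P_j^2$ both lie in the common bi-coloured component $S$. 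This handles the conflicting case with partitions of size $3$.

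\textbf{Step 2: handle the non-conflicting case by cutting at the "seeing" witnesses.} When $P_i$ and $P_j$ have at most two common vertices and are not conflicting, the task is to cut each path so that no resulting pair witnesses a "sees" relation. The idea is that $P_i$ sees $P_j$ precisely when some initial portion of $P_i$ can be extended, by a path of colour $i$, into the interior of $P_j$; the set of internal vertices $x$ of $P_i$ that can "reach" $P_j$ this way will, by the colour-$i$ shortest-path structure, form a prefix-type region of $P_i$. I would cut $P_i$ at the last vertex from which such a reach is possible and at the first such vertex, and symmetrically cut $P_j$; using Propositions \ref{prop:shortest_path} and \ref{prop:two_path_colours} to show that a reach from a late vertex of one piece into an early piece of the other would force extra common vertices or collapse the two colours into one component, contradicting non-conflict. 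Each such cut adds a bounded number of pieces, and combining the two analyses (taking the \emph{intersection} of the two path partitions in the sense defined in the preliminaries, whose size is controlled by the sum of the parts) keeps the total below the stated bound of $9$.

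\textbf{The hard part} will be Step 2: pinning down exactly which cut points guarantee blindness in the non-conflicting case, and verifying that one never needs more than nine pieces. The "sees" relation ranges over \emph{all} colour-$a$ extensions and \emph{all} colour-$j$ paths meeting the interior, not just the fixed paths $P_i,P_j$, so I must rule out an extended colour-$i$ path from a piece of $L_i$ sneaking into a piece of $L_j$ even though $P_i$ and $P_j$ themselves barely touch. The main lever is that any such sneaking extension, together with the original paths, would produce three common vertices or a pair of equal-length colour-$i$ and colour-$j$ paths between the same endpoints, which by Proposition \ref{prop:two_path_colours} places those endpoints in a common bi-coloured component and hence recreates a conflict, contradiction. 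Bookkeeping the at-most-two common vertices together with the two cut points on each path, and showing these constants never exceed nine after taking intersections of partitions, is where the counting must be done carefully.
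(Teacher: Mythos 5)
Your Step~1 cannot get off the ground: the hypothesis of the lemma is that $P_i$ and $P_j$ are \emph{internally vertex-disjoint}, so they share at most their endpoints and never have three common vertices, and Lemma~\ref{lem:3_conflict} is never applicable to the pair $(P_i,P_j)$ itself. The whole difficulty of the statement is that the ``sees'' relation is witnessed by \emph{other} colour-$i$ paths leaving internal vertices of $P_i$, not by intersections of $P_i$ with $P_j$. Consequently all of the work falls into your Step~2, and that is where the essential idea is missing. Cutting $P_i$ at the first and last internal vertices from which a colour-$i$ path to $t_i$ reaches the interior of $P_j$ does make the resulting prefix and suffix blind towards $P_j$ (the set of such vertices is indeed closed downwards along $P_i$, as you observe), but it does nothing for the middle piece: whether $P_i[y,x_1]$ sees $P_j$ is governed by colour-$i$ paths ending at $x_1$ rather than at $t_i$, a condition your cuts do not control, and naively repeating the cut inside the middle piece gives no bound of $9$.

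The paper closes exactly this gap with a construction your proposal does not contain: it iterates the ``last seeing vertex'' selection three times ($x_1$ reaching $t_i$, then $x_2$ reaching $x_1$ inside $P_i[s_i,x_1]$, then $x_3$ reaching $x_2$), and, if all three exist, splices the three witnessing paths $Q^1_i,Q^2_i,Q^3_i$ onto $P_i[s_i,x_3]$ to obtain an \emph{auxiliary} colour-$i$ path $P_i'$ that genuinely meets $P_j$ in three vertices. Only then is Lemma~\ref{lem:3_conflict} invoked --- on $P_i'$ and $P_j$, not on $P_i$ and $P_j$ --- producing a conflicting component $S$; Lemma~\ref{lem:compo_inter_subpath} and Lemma~\ref{lem:inter_main} applied around $S$ are then what make the pieces of the \emph{original} $P_i$, cut at $x_1,x_2,x_3$ and at the edges entering and leaving $S$, blind with respect to the pieces of $P_j$, and the count of these cut points is where the constant $9$ comes from. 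Your closing remark that a ``sneaking extension would produce three common vertices'' gestures at this, but without the explicit rerouted path and the three-level recursion there is no source for the three intersection points, no termination argument for the middle piece, and no derivation of the bound. You would need to add this construction for the argument to go through.
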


\begin{proof}
    Suppose $P_i$ is an $(s_i, t_i)$-path and $P_j$ is an $(s_j, t_j)$ path. If $P_i$ does not see $P_j$, then 
    $L_i = \{ P_i\}$ and $L_j = \{P_j\}$ satisfy the properties of the lemma. 
    Suppose now $P_i$ sees $P_j$ and let $x_1$ denote the last vertex of $P_i$ 
    from which there exists a path $Q^1_i$ of colour $i$ to $t_i$ which uses some vertex of $P_j \setminus \{s_j, t_j\}$. 
    Because $P_i$ and $P_j$ are internally vertex disjoint, $x_1 \not = t_i$. 
    Let $x_1'$ denote the vertex just after $x_1$ on $P_i$. Note that $P_i[x_1',t_1]$ does not see $P_j$.

    Now let $x_2$ denote the last vertex of $P_i[s_i, x_1]$ from which there exists a path  
    of colour $i$ to $x_1$ which uses some vertex of $P_j \setminus \{s_j, t_j\}$. 
    Again, if this vertex does not exist, 
    then $L_i = \{P_i[s_i, x_1], (x_1,x_1'), P_i[x_1', t_i] \}$ and $L_j = \{P_j\}$ satisfy the properties of the
    lemma. Suppose from now on that $x_2$ exists and let $x_2'$ be the vertex just after $x_2$ on $P_i$.
    Since $P_i$ and $P_j$ are internally vertex-disjoint, $x_2 \not = x_1$ and thus $x_2' \in P_i[s_i, x_1]$.
    Again, note that $P_i[x_2',x_1]$ does not see $P_j$.
    
    Let $x_3$ denote the last vertex of $P_i[s_i, x_2]$ from which there exists a path $Q^3_i$ 
    of colour $i$ to $x_2$ which uses some vertex of $P_j \setminus \{s_j, t_j\}$. 
    Again, we can assume that this vertex exists or $L_i = \{ P_i[s_i, x_2], (x_2, x_2'), P_i[x_2',x_1], (x_1, x_1'), P_i[x_1', t_i] \} $
    and $L_j = \{P_j\}$ satisfy the properties of the lemma. Let $x_3' \in P_i[s_i, x_2]$ denote the vertex just after $x_3$ on $P_i$. 
    Again, note that  $P_i[x_3',x_2]$ does not see $P_j$. 

    Note that for any internal vertex $x \in Q^1_i$ and $y \in Q^2_i$, $y <_i x$. This implies that the intersection 
    of $ Q^1_i$ and $Q^2_i$ is equal to $x_1$, and the same argument applies for $Q^3_i \cap Q^2_i$ and $Q^3_i \cap Q^1_i$.   
    This means that the paths $P_j$ and $P'_i = P_i[s_i, x_3]Q^3_iQ^2_iQ^1_i$ intersect on at least 
    3 vertices and thus are conflicting by Lemma \ref{lem:3_conflict}. Let $S$ denote the conflicting component of $P_i'$ and $P_j$. 
        
    Suppose first that none of the $s_i, t_i, s_j, t_j$ belong to $S$ and denote by $e_i$ the last edge of $P'_i$ without both endpoints in $S$, $e_j$ the last edge of $P_j$ before $S$, $h_i$ the first edge of $P'_i$ after $S$ and $h_j$ the first edge of $P_j$ after $S$. 

    \begin{claim}
        All the pairs of paths among $P'_i[s_i, t(e_i)]$,$e_i$, $P'_i[h(e_i), t(h_i)]$, $h_i$, $P'_i[h(h_i), t_i]$, $P_j[s_j, t(e_j)]$, $e_j$, $P_j[h(e_j), t(h_j)]$, $h_j$ and $P_j[h(h_j), t_j]$ are blind, except from $P_j[h(e_j), t(h_j)]$ and $P'_i[h(e_i), t(h_i)]$ which belong to the same bi-coloured component. 
    \end{claim}

    \begin{proof}
        Since $e_i$ and $h_i$ are not edges of 
        $S$ and there exists a path of colour $i$ in this component from $h(e_i)$ to $t(h_i)$, 
        then by Lemma \ref{lem:compo_inter_subpath} no path of colour $i$ from $s_i$ to $t(e_i)$ or from $h(h_i)$ to 
        $t_i$ can use any vertex of $S$. However, any path of colour $j$ from $h(e_j)$ to $t(h_j)$ is a path of $S$, 
        so it cannot intersect any path of colour $i$ from $s_i$ to $t(e_i)$ or from $h(h_i)$ to $t_i$. This means that $(P'_i[s_i, t(e_i)],P_j[h(e_j), t(h_j)])$ and $(P'_i[h(h_i), t_i],P_j[h(e_j), t(h_j)])$ are blind pairs. By reversing the role of $i$ and $j$, it also means that $(P_j[s_j, t(e_j)],P_i'[h(e_i), t(h_i)])$ and $(P_j[h(h_j), t_j],P_i'[h(e_i), t(h_i)])$ are blind pairs.
    
        By the definition of conflicting and Lemma \ref{lem:inter_main}, we can show that 
        no path of colour $i$ from $s_i$ to $t(e_i)$ or from $h(h_i)$ to $t_i$ can 
        intersect a path of colour $j$ from $s_j$ to $t(e_j)$ or $h(h_j)$ to $t_j$. Indeed, suppose for example that 
        there exists a path $H_i$ of colour $i$ from $s_i$ to $t(e_i)$ that intersects a path $H_j$ 
        of colour $j$ from $s_j$ to $t(e_j)$. In that case the paths $H_iP_i'[t(e_i), h(h_i)]$ and 
        $H_jP_j[t(e_j), h(h_j)]$ contradict Lemma \ref{lem:inter_main} as $S$ is a conflicting component 
        for these two paths, but they also intersect outside of $S$. The other cases are symmetrical and thus all pairs of paths among $P'_i[s_i, t(e_i)]$, $P'_i[h(h_i), t_i]$, 
        $P_j[s_j, t(e_j)]$ and $P_j[h(h_j), t_j]$ are blind. 
        
        This ends the proof of the claim as the other pairs contain an edge and are blind by definition and $P_j[h(e_j), t(h_j)]$ and $P'_i[h(e_i), t(h_i)]$ are paths of $S$.
    \end{proof}

    Let $L_j = \{P_j[s_j, t(e_j)], e_j, P_j[h(e_j), t(h_j)], h_j, P_j[h(h_j), t_j] \}$. 
    Suppose first that $t(e_i)$ appears after $x_3$ on $P'_i$. It means that $P_i[s_i, x_3]$ is a subpath
    of $P'_i[s_i, t(e_i)]$, and in particular $P_i[s_i, x_3]$ does not see $P_j$. Setting $L_i = \{P_i[s_i, x_3],(x_3, x_3'),P_i[x_3', x_2], (x_2, x_2'), P_i[x_2', x_1], (x_1,x_1'), P_i[x_1', t_i], \}$, we then have that no path of $L_i$ sees $P_j$ and thus any path of $L_j$. 

    Suppose now that $t(e_i)$ appears before $x_3$ on $P'_i$. Note that $t(h_i)$ has to appear after $x_3$ or there is 
    no path from $x_3$ to $t_i$ intersecting $P_j$, which contradicts the choice of $x_3$.
    In that case, setting $L_i = \{P_i[s_i, t(e_i)], e_i, P_i[h(e_i), x_3], (x_3, x_3'), P_i[x_3, x_2], (x_2, x_2')
    P_i[x_2', x_1], (x_1, x_1'), P_i[x_1, t_i] \}$, we also have 
    that the only path of $L_i$ that sees a path of $L_j$ is $P_i[h(e_i), x_3]$. 
    Moreover, it can only see $P_j[h(e_j), t(h_j)]$, but these paths belong to the same bi-coloured component $S$.

    The cases where some of the $s_i, t_i, s_j, t_j$ belong to $S$ are treated exactly the same, except that some of the $e_i, e_j, h_j, h_i$ might not exist, which means we have fewer paths to consider.
\end{proof}

By applying the previous lemma several times, we obtain the following: 

\begin{lemma}\label{lem:blind_main}
    There exists a constant $C$ such that if $G$ is a $k$-shortest graph, $i,j$ two integers in $[k]$,
    $P_i$ a path of colour $i$ and $P_j$ a path of colour $j$ which are internally vertex-disjoint,
    then there exists a path partition $L_i$ of $P_i$ and a 
    path partition $L_j$ of $P_j$, both of size at most $C$ with the following properties:
    \begin{itemize}
        \item Each $L_a$ consists of at most $C$ paths of colour $a$. 
        \item For any pair of path $H_i \in L_i$ and $H_j \in L_j$ which are not blind, 
        then $H_i$ and $H_j$ are paths of the same bi-coloured component. 
    \end{itemize}
\end{lemma}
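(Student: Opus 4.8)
The plan is to obtain $L_i$ and $L_j$ by invoking Lemma \ref{lem:blind_1} twice, once in each direction, and then taking the common refinement of the resulting partitions. Concretely, I would first apply Lemma \ref{lem:blind_1} to the ordered pair $(P_i, P_j)$ to get path partitions $A_i$ of $P_i$ and $A_j$ of $P_j$, each of size at most $9$, all of whose parts have the appropriate colour, and such that for every $H \in A_i$ and $H' \in A_j$ either $H$ and $H'$ lie in a common bi-coloured component or $H$ does not see $H'$. Then I would apply Lemma \ref{lem:blind_1} a second time to the swapped pair $(P_j, P_i)$, obtaining partitions $B_j$ of $P_j$ and $B_i$ of $P_i$ with the symmetric guarantee: for every $H \in B_j$ and $H' \in B_i$, either they share a bi-coloured component or $H$ does not see $H'$. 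Finally I set $L_i$ to be the intersection (common refinement) of $A_i$ and $B_i$, and $L_j$ the intersection of $A_j$ and $B_j$. Since the intersection of two path partitions has at most the sum of their sizes, both $L_i$ and $L_j$ have at most $18$ parts, so $C = 18$ suffices; and since every part of $L_a$ is a subpath of a colour-$a$ path, it is itself a path of colour $a$, which gives the first bullet.

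The engine of the verification is a monotonicity property of the \emph{sees} relation: if $H \subseteq H'$ are colour-$i$ paths and $K \subseteq K'$ are colour-$j$ paths, then $H$ seeing $K$ implies $H'$ sees $K'$. To justify this I would observe that an internal vertex $x$ of $H$ is still internal in $H'$, and that a colour-$i$ path $Q$ witnessing that $H$ sees $K$ (running from $x$ to the head of $H$ and meeting an internal vertex of $K$) can be prolonged along $H'$ to the head of $H'$; the concatenation has length equal to the distance between its endpoints, hence is a shortest, and in particular simple, colour-$i$ path by Proposition \ref{prop:shortest_path}, and it still meets an internal vertex of $K$, which is internal in $K'$. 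Thus it witnesses that $H'$ sees $K'$.

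It then remains to verify the second bullet. I would take $H_i \in L_i$ and $H_j \in L_j$ that are \emph{not} blind, so that by definition either $H_i$ sees $H_j$ or $H_j$ sees $H_i$. Suppose $H_i$ sees $H_j$. Because $L_i$ refines $A_i$ and $L_j$ refines $A_j$, there are parts $H_i \subseteq H_i^A \in A_i$ and $H_j \subseteq H_j^A \in A_j$; by monotonicity $H_i^A$ sees $H_j^A$, so the guarantee of the first application forces $H_i^A$ and $H_j^A$ into a common bi-coloured component $S$. Since a subpath of a path of $S$ is again a path of $S$, both $H_i \subseteq H_i^A$ and $H_j \subseteq H_j^A$ lie in $S$, as required. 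If instead $H_j$ sees $H_i$, the same reasoning applied to the refinements of $B_j$ and $B_i$ and to the guarantee of the second application places $H_i$ and $H_j$ in a common bi-coloured component.

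The step I expect to require the most care is the monotonicity of \emph{sees}: one must check that extending the witnessing colour-$i$ path along $H'$ creates no repeated vertex (handled by the shortest-path characterisation of Proposition \ref{prop:shortest_path}) and does not lose the intersection with the \emph{interior} of $K$, as opposed to merely its endpoints (handled by the fact that interior vertices of a subpath remain interior in the enclosing path). The reason two applications of Lemma \ref{lem:blind_1} are genuinely needed, rather than one, is that \emph{sees} is not symmetric, so a single application controls only one of the two directions appearing in the definition of blindness.
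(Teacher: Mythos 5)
Your proof is correct, but it takes a genuinely different and substantially simpler route than the paper's. The paper also starts from Lemma \ref{lem:blind_1}, but then runs an iterative refinement: while some part of $Q_j$ still sees some part of $Q_i$ without sharing a bi-coloured component, it re-applies Lemma \ref{lem:blind_1} \emph{locally} to each offending pair, intersects the resulting partitions of $H_i$, and argues that the number of ``badly seen'' parts strictly decreases; termination after at most $9$ rounds yields the bound $C = 9^{91}$. Your observation that \emph{sees} is monotone under enlarging both paths --- the witness vertex stays internal, the witness path extends along the superpath to a colour-$i$ path by the concatenation/shortest-path argument, and interior intersections are preserved --- collapses this entire iteration: the paper in fact uses exactly this monotonicity implicitly (when it asserts that a subpath of a part of $Q_{i,a}$ is not seen by any part of $Q_{j,a}$), but never isolates it, and so does not notice that two global applications of Lemma \ref{lem:blind_1}, one per orientation of the asymmetric \emph{sees} relation, followed by a single common refinement, already certify both halves of blindness. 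Your version gives $C = 18$ instead of $9^{91}$, which propagates to better constants in $C(k,l)$ downstream (though it does not change the asymptotic form $n^{O(l^{5^k})}$ of the final running time). The one step that genuinely needs care is the one you flag: simplicity of the extended witness path (which follows since colour-$i$ paths strictly increase the level, so the two pieces meet only at the shared endpoint) and the fact that internal vertices of a subpath remain internal in the enclosing part, so the intersection certifying \emph{sees} is not lost at an endpoint. Both are handled correctly in your write-up.
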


\begin{proof}
    Let $Q_i$, $Q_j$ be the path partitions obtained by applying Lemma \ref{lem:blind_1} to $P_i$ and $P_j$.
    We know that for any pair of paths $H_i \in Q_i$ and $H_j \in Q_j$, then either $H_i$ and $H_j$ are 
    paths of the same bi-coloured component of colours $i,j$, or $H_i$ does not see $H_j$. 
    
    Now as long as there exists a path in $H_i \in Q_i$ such that there exists 
    some path  $H_j \in Q_j$,
    such that $H_j$ sees $H_i$ and is not a path of the same bi-coloured component as $H_i$, 
    we do the following. 
    Let $H_{j,1}, \dots, H_{j,r}$ denote all the paths of $Q_j$ which see $H_i$ and do not belong 
    to the same bi-coloured component. For any $a \in [r]$, let $Q_{j,a}$ and $Q_{i,a}$ 
    denote the set of path partitions obtained by applying Lemma \ref{lem:blind_1} to $H_{j,a}$ and $H_i$. 
    Let $Q'_i$ be the intersection of all the partitions $Q_{i,a}$ of $P_i$. Because every path of $Q'_i$
    is a subpath of some $Q_{i,a}$ for any $a \in [r]$, it means that this path is not seen by any path in $Q_{j,a}$ which is not a path of the same bi-coloured component. 
    Let us update $Q_i$ by replacing 
    $H_i$ by $Q'_i$ and update $Q_j$ by replacing each of the $H_{j,a}$ by $Q_{j,a}$. 
    By doing that, the number of paths in $Q_i$ which is seen by some path $H_j \in Q_j$ which is not a path of the same bi-coloured component decreases strictly as none of the paths of $Q'_i$ satisfy these properties. At each step, we multiply the number of paths in $Q_j$ by at most 9 and the number of paths in $Q_i$ by at most $9|Q_j|$. However, we only have to do this $9$ steps as initially the sets $Q_i$ and $Q_j$ have size at most $9$. Finally, this means that after 9 steps, $|Q_j| \leq 9^9$ and $|Q_i|\leq  9(9|Q_j|)^{9} \leq 9^{91}$. This ends the proof for $C = 9^{91}$
\end{proof}

\section{Proof of the main theorem}

We are now ready to describe and prove our algorithm. First we will show, using Lemma \ref{lem:blind_1} and induction, that any solution $P_1, \dots, P_l$ of some $k$-DSP can be decomposed into some path partitions $L_1, \dots, L_l$ where each $L_i$ has size at most $C(k,l)$ for some function $C$ and any pair of paths in the union of the $L_i$ of different colours is blind. 
The algorithm then consists of guessing the endpoints and colours of the partitions $L_1, \dots, L_l$ (there is at most $n^{O(C(k,l))}$ possible choices) and then solve the $k$-DSP defined using the fact that all the pairs of paths of different colours are blind. Roughly speaking, each colour class is solved almost independently using Fortune et al. Algorithm.

\subsection{Proof of the blind case}

Let us first show the existence of the algorithm in the blind case. Note that we also have some list of forbidden components for each path in the partitions $L_1, \dots, L_l$. This is due to some technicalities in the proof of the existence of such decomposition. 

\begin{lemma}\label{lem:blind_case}
    Let $G$ be a $k$-shortest graph, $(s_1, t_1), \dots, (s_l, t_l)$ a set of pairs and $c$ a function 
    from $[l]$ into $[k]$. Moreover, suppose that for every $i$, there 
    is a list $F_i$ of bi-coloured components where one of the colours being $c(i)$.
    There exists an algorithm running in time $
    n^{O(l)}$ that either returns a  solution $P_1, \dots, P_l$ to the $k$-DSP defined by $G$, 
    the $(s_i, t_i)$ and $c$ or shows that no solution is such that each $P_i$ does not use any 
    vertex of any component in $F_i$ 
    and moreover, for any indices $i$ and $j$, either $P_j$ and $P_i$ are blind or $P_i$ is a path 
    of some component of $F_j$ or $P_j$ is a path of some component of $F_i$.   
\end{lemma}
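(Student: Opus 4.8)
The plan is to set up a single product reachability over \emph{head configurations} and to reduce both same-colour and cross-colour disjointness to invariants that are visible from the heads, using blindness to control the cross-colour case. A configuration is a tuple $(v_1,\dots,v_l)$ of vertices, with $v_i$ the current endpoint of a partial path of colour $c(i)$ started at $s_i$, subject to $v_i$ lying in no component of $F_i$ and (for interior steps) the $v_i$ being pairwise distinct. A \emph{move} extends one request $i$ along a single colour-$c(i)$ edge into an unoccupied vertex. Writing $\mathrm{lev}_a(x)$ for the index $r$ with $x\in V^a_r$, the potential $\Phi=\sum_i \mathrm{lev}_{c(i)}(v_i)$ increases by exactly $1$ at every move, so the configuration digraph is acyclic and reachability from $(s_1,\dots,s_l)$ to $(t_1,\dots,t_l)$ is decidable by a topological sweep in time $n^{O(l)}$. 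The two things to establish are \emph{soundness} (an accepting run yields genuinely vertex-disjoint paths) and \emph{completeness} (an accepting run exists whenever a solution of the promised form exists). Completeness is essentially free: a promised solution is vertex-disjoint — blind pairs are internally disjoint by definition, and in an escape pair where $P_i$ lies in a component $C\in F_j$ the path $P_j$ avoids $C$ — so any schedule of its edges keeps the heads distinct and avoids the forbidden components, hence is a legal run.

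For same-colour disjointness I would exploit that a colour-$a$ path meets each level $V^a_r$ in exactly one vertex. If the colour-$a$ requests are advanced so that their heads always share a common level $r$ and move $r\mapsto r+1$ in lockstep, then two colour-$a$ paths share a vertex precisely when their heads coincide at some level; requiring distinct heads level-by-level is therefore \emph{equivalent} to vertex-disjointness within a colour. This is exactly the Fortune et al.\ reachability in the acyclic orientation of the colour-$a$ edges \cite{FORTUNE1980111}, sound and complete for each colour class in isolation.

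The crux, and the step I expect to be the main obstacle, is cross-colour soundness with only $O(l)$ vertices of state: two paths of different colours can traverse a common vertex $u$ at non-simultaneous times, so head-distinctness alone does not forbid this. Here I would invoke blindness through the following observation. Suppose a run built a colour-$i$ path $A$ and a colour-$j$ path $B$ ($i\neq j$) sharing an interior vertex $u$, with $A$ reaching $u$ first. At the moment $B$ enters $u$, the head of $A$ already lies forward of $u$, so the subpath of $A$ starting just before $u$ continues, via a colour-$i$ path through $u$, all the way to $t_i$; since $u$ is interior to $B$, this is exactly a witness that $A$ \emph{sees} $B$ (the reverse order yields $B$ sees $A$). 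Thus any cross-colour interior collision that escapes the head test forces a seeing relation — which a blind pair does not possess, and which an escape pair avoids since the two paths then live in a component and its complement.

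Granting the above, I would equip each move with a pruning predicate that refuses to enter a vertex $w$ whenever this would create such a forward colour-path to an active target meeting another request's partial path; the predicate is phrased through the forward-reachability sets $R_a(\cdot)$ of ``vertices on a colour-$a$ path to the relevant target'', precomputed once. The delicate point — the true heart of the argument — is to show that this predicate is evaluable from the head configuration together with the precomputed reachability data, and that it never rejects a promised solution; this is exactly where the full strength of blindness (no seeing in either direction) and the $F_i$ escape are used, with Proposition~\ref{prop:two_path_colours} and Lemma~\ref{lem:inter_main} ruling out the offending collision configurations. With this in hand, soundness follows (no cross-colour interior sharing, no same-colour sharing, and the $F$-walls together give pairwise disjointness), completeness is the free direction above, and the whole procedure runs in $n^{O(l)}$, matching the statement.
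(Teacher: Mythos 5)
Your architecture is the right one and matches the paper's: a configuration digraph on $l$-tuples of heads, $n^{O(l)}$ states, and a pruning rule phrased via colour-reachability to the targets, with blindness and the lists $F_i$ justifying it. But the proposal stops exactly where the proof has to happen: you write ``granting the above'' and ``with this in hand'', and the two things you grant yourself are precisely the two claims that constitute the paper's proof. First, soundness. Your observation that an interior collision between the run's paths $A$ and $B$ produces a seeing witness does not yield a contradiction, because $A$ and $B$ are whatever the run built, not the promised blind solution; nothing forces the run's output to be blind (and ``sees'' is only even defined for internally disjoint pairs). What actually prevents collisions is an inductive invariant on configurations --- at every step, no colour-$c(i)$ path from the current head $x_i$ to $t_i$ avoiding $F_i$ meets any committed vertex of any other partial path --- maintained by the arc condition ``when $i$ advances off $x_i$, for every $j\neq i$ either no colour-$c(j)$ path from $x_j$ to $t_j$ passes through $x_i$, or $x_i$ lies in a component of $F_j$.'' You name a predicate of roughly this shape but never verify that it is preserved by moves or that it implies disjointness of the final paths; that verification is the content of the paper's second claim and is not routine.

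Second, completeness is not ``essentially free'' once the pruning predicate is in force: the predicate quantifies over \emph{all} colour-$c(j)$ paths from $x_j$ to $t_j$, not just $P_j$ itself, so an arbitrary schedule of the edges of a disjoint solution can be rejected --- already for two requests of the same colour, advancing them in the wrong order violates it. The paper's first claim shows a legal schedule exists by an extremal argument: take the largest reachable configuration, and within a colour class advance a request whose head is minimal for that colour's acyclic order (so no other unfinished same-colour head has a colour path to it), then invoke blindness and the $F$-lists to discharge the condition for the cross-colour requests. Your lockstep-by-levels idea for same-colour pairs is a different mechanism, but it is not integrated with the cross-colour predicate and does not by itself handle sources sitting at different levels; in the paper a single predicate covers both cases. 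As written, the proposal is a correct plan with the two load-bearing steps deferred, so it does not yet constitute a proof of the lemma.
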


To prove this lemma, we will build an auxiliary digraph $D$ such that a solution satisfying the 
properties of the lemma exists if and only if there exists a directed path in $D$ 
between two specified vertices. 

First note that, by potentially replacing some vertices with an independent set with the same 
neighbourhood, we can assume that all the $s_i$ and $t_i$ are disjoint. 

The vertices of $D$ will correspond to $l$-tuples $(x_1, \dots, x_l)$ of vertices of $G$. 
Intuitively, we are trying to build the paths $P_i$ starting 
from $s_i$, and $x_i$ is the last vertex of a prefix of $P_i$ we are considering. For any pair of vertices 
$(x_1, \dots, x_l)$ and $(y_1, \dots, y_l)$, $D$ contains the arc from $(x_1, \dots, x_l)$ to
$(y_1, \dots, y_l)$ if the following are satisfied:
\begin{itemize}
    \item There exists $i \in [l]$ such that $x_j = y_j$ for all $j \in [l]$, $j \not = i$.
    \item $x_iy_i$ is an edge of colour $c(i)$ such that there exists a path of colour $c(i)$ 
    from $y_i$ to $t_i$ avoiding the components in $F_i$. 
    \item For all $j \in [l]$ different from $i$, $y_i \not = x_j$ and either there is no path of colour $c(j)$ from 
    $x_j$ to $t_j$ that uses the vertex $x_i$, or $x_i$ is a vertex of a component of $F_j$.
\end{itemize}

Let $S = (s_1, \dots, s_l)$ and $T = (t_1, \dots t_l)$.
The next two claims finishes the proof of Lemma \ref{lem:blind_case}.

\begin{claim}
    If there exists a solution $P_1, \dots, P_l$ to the $k$-DSP defined by $G$, 
    $(s_i, t_i)$ and $c$ such that each $P_i$ does not use a vertex of any component in $F_i$ 
    and moreover, for any indices $i$ and $j$, either $P_j$ and $P_i$ are blind, $P_i$ is a path 
    of some component of $F_j$ or $P_j$ is a path of some component of $F_i$, then there is 
    a path in $D$ from $S$ to $T$.
\end{claim}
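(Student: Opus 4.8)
The plan is to exhibit an explicit $S$--$T$ dipath in $D$ that \emph{traces} the given solution. I maintain a configuration $(x_1,\dots,x_l)$, starting at $S$, where each $x_i$ is a vertex of $P_i$, and at each step I advance a single path $i$ from its current vertex $x_i$ to the next vertex $y_i$ along $P_i$. With this rule the first two arc conditions are automatic: $x_iy_i$ is an edge of $P_i$, hence of colour $c(i)$, and the suffix $P_i[y_i,t_i]$ is a colour-$c(i)$ path from $y_i$ to $t_i$ avoiding $F_i$ (since all of $P_i$ does, by hypothesis). Moreover $y_i\neq x_j$ for $j\neq i$ holds because, the terminals being disjoint and the $P_i$ internally disjoint, vertices on distinct solution paths are distinct. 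Thus the only genuine requirement is condition (c): whenever I leave a vertex $x_i$, no other path $j$ may admit a colour-$c(j)$ path from its current vertex $x_j$ to $t_j$ through $x_i$, unless $x_i$ lies in a component of $F_j$. The whole difficulty is therefore reduced to scheduling the single-step advances so that this invisibility condition holds at each step.

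The key reduction is to show that almost all potential obstructions are forbidden by blindness. Suppose that, at some configuration, leaving $x_i=v$ is blocked by path $j$, witnessed by a colour-$c(j)$ path $R$ from $x_j$ to $t_j$ through $v$ with $v$ outside every component of $F_j$. If $v$ is internal to $P_i$ and $x_j$ is internal to $P_j$, then $R$ witnesses that $P_j$ \emph{sees} $P_i$. By the hypothesis of the lemma the pair $(P_i,P_j)$ is blind, or $P_i$ is a path of a component $C\in F_j$, or $P_j$ is a path of a component $C'\in F_i$. The first case contradicts $P_j$ seeing $P_i$; the second is impossible because then $v\in P_i\subseteq C$, so $v$ lies in a component of $F_j$, contrary to the choice of the block; and in the third case $R$ has both endpoints $x_j,t_j$ in $C'$, so by Lemma~\ref{lem:compo_inter_subpath} $R$ lies entirely in $C'$, giving $v\in C'$, which is impossible since $P_i$ avoids $F_i\ni C'$. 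Hence any genuine block forces the pair to be blind and, crucially, its witness to involve a \emph{start vertex}: either the seeing endpoint $x_j=s_j$, or the seen vertex $v=s_i$ (the targets $t_i$ cannot occur, since a finished path sees nothing but itself).

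It remains to order the advances so that no start-induced block is ever active. I would model each advance as an event, with the within-path chain order and the cross-precedences ``$j$ must have passed $P_j[\tau_j(v)-1]$ before $v$ is left'', where $P_j[\tau_j(v)-1]$ is the last position of $P_j$ still seeing $v$; by the previous paragraph every cross-precedence has its witness at a start, so there are no internal--internal precedences at all. A feasible serial schedule exists iff this precedence digraph is acyclic, and the only edges that can close a cycle are those whose witness is a start. The main obstacle is exactly here: ruling out a cyclic pattern of start-visibilities (the simplest being two paths whose starts mutually lie on a colour-geodesic of the other pair). I expect to resolve it by an extremal argument, advancing the departures from starts in a linear extension of the start-visibility relation and showing, again through blindness together with the bi-coloured component structure of Lemma~\ref{lem:compo_inter_subpath} and the conflict analysis of Lemmas~\ref{lem:inter_main} and~\ref{lem:3_conflict}, that this relation admits no directed cycle. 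Once a feasible order is fixed, replaying the advances in that order yields a walk in $D$ from $S$ to $T$, proving the claim.
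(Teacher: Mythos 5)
Your overall strategy --- trace the given solution through $D$, observe that the first two arc conditions and the disjointness requirement are automatic, and use the blindness/$F$-list trichotomy to kill internal--internal obstructions --- is the same as the paper's, and your three-case analysis of a potential block (blind pair; $P_i$ contained in a component of $F_j$, so $x_i$ lies in $F_j$; $P_j$ contained in a component of $F_i$, so via Lemma~\ref{lem:compo_inter_subpath} any witness path would have to enter a component that $P_i$ avoids) matches the paper's reasoning for cross-colour pairs. But the proposal stops short of a proof precisely where you say it does: the acyclicity of your start-visibility precedence relation, which you identify as ``the main obstacle'' and only ``expect to resolve'', is the entire content of the scheduling step. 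No feasible order is actually exhibited, so the claim is not established.

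The paper avoids inventing a schedule altogether. It orders the set of configurations $(x_1,\dots,x_l)$ with $x_i\in P_i$ lexicographically along the paths, takes the \emph{largest} configuration $A$ reachable from $S$, and derives a contradiction from $A\neq T$ by producing a single advanceable index. The key choice you are missing is how that index is picked: fix a colour $c_1$ with an unfinished path and, among the unfinished indices of colour $c_1$, choose $i$ such that no other $x_j$ of that colour reaches $x_i$ by a colour-$c_1$ path; such an $i$ exists because each colour class induces an acyclic digraph. This is how same-colour obstructions are dispatched, and blindness is invoked only for $j$ with $c(j)\neq c_1$. Your argument instead applies the blindness/$F$ trichotomy to \emph{every} pair, including same-colour ones; but in the way the lemma is actually used (Lemma~\ref{lem:decomp_sol} and the proof of Theorem~\ref{thm:main}) that trichotomy is guaranteed only for pairs of different colours, so same-colour paths may well see each other at internal vertices and your assertion that ``there are no internal--internal precedences at all'' fails. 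Thus there are two concrete gaps: the unproven acyclicity of the precedence relation you construct, and the absent treatment of same-colour conflicts via the DAG structure of each colour class, which is the mechanism the paper actually relies on.
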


\begin{proof}
    Let $P_1, \dots P_l$ denote such a solution in $G$. Let $X$ be the set of vertices of $D$
    corresponding to $l$-tuples obtained by taking one vertex per path $P_i$. 
    We can define a natural order on $X$ by considering for 
    each $P_i$ the order induced by the path and taking the lexicographic order. Note that $T$ is 
    the maximal element of $X$. 
    
    Consider now the largest element $A=(x_1, \dots, x_l)$ of $X$ which is reachable in $D$ from $S$ 
    and suppose, in order to reach a contradiction, that this element is not $T$. Consider some colour 
    $c_1$ and $I$ the set of indices of $i$ of $[l]$ such that for $c(i) = c_1$ and $x_i \not = t_i$. 
    Because the edges of colour $c_1$ induce an acyclic digraph, there exists an index $i \in I$ such 
    that for every $j \in I$ with $j \not = i$, there is no path of colour $c_1$ from $x_j$ to 
    $x_i$. 
    Now for any $j \in [l]$ such that $c(j) \not = c_1$, then either the path $P_i$ and $P_j$ 
    are blind, in which case there is no 
    path of colour $c(j)$ from $x_j$ to $t_j$ that uses $x_i$, or $P_i$ (and thus $x_i$) is in some component of $F_j$, or $P_j$ is a path of some component of $F_i$. Note that in the 
    last case, any path from $x_j$ to $t_j$ is a path of some component of $F_i$, but $x_i$ 
    cannot be a vertex of this component, and thus no such path can use $x_i$.
    Therefore, if we note $x_i'$ the 
    vertex just after $x_i$ on $P_i$ and $A'$ the vertex of $D$ obtained from $A$ by only changing 
    $x_i$ into $x_i'$, then there exists an arc from $A$ to $A'$. However, this means that $A'$ 
    is reachable from $S$ in $D$, which contradicts the maximality of $A$. 
\end{proof}

And the opposite direction. 

\begin{claim}
    If there exists a path from $S$ to $T$ in $D$, then 
    there exists a solution $P_1, \dots, P_l$ to the DSP defined by $G'$, 
    the $(s_i, t_i)$ and $c$ such that $P_i$ does not use any vertex of any component in $F_i$.
\end{claim}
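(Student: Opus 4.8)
The plan is to read the solution directly off the vertices visited by a directed $S$–$T$ path in $D$. First I would fix such a path $S = A_0, A_1, \dots, A_m = T$ and write $A_p = (x^p_1, \dots, x^p_l)$. By the first item in the definition of the arcs, each step $A_{p-1} \to A_p$ changes exactly one coordinate, moving it along a single colour-$c(i)$ edge traversed in its forward orientation. For each $i \in [l]$ I would let $P_i$ be the concatenation of the colour-$c(i)$ edges used to advance coordinate $i$, so that $P_i$ passes precisely through the distinct values taken by $x^p_i$ as $p$ ranges over $[0..m]$. Since concatenating paths of colour $c(i)$ again yields a path of colour $c(i)$, each $P_i$ is a path of colour $c(i)$; being a colour-$c(i)$ path it is strictly increasing for $\leq_{c(i)}$, hence visits no vertex twice, and it runs from $x^0_i = s_i$ to $x^m_i = t_i$. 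This already shows that every $P_i$ is an $(s_i,t_i)$-path of colour $c(i)$.

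Avoidance of $F_i$ comes from the second item: whenever coordinate $i$ advances to a vertex $y_i$, there is a path of colour $c(i)$ from $y_i$ to $t_i$ avoiding the components of $F_i$, so $y_i$ lies in no such component. Every vertex of $P_i$ other than $s_i$ is the head of one of these advancing edges, so, using that no terminal lies in its own forbidden list (a harmless assumption, since otherwise no solution avoiding the $F_i$ could exist), the whole path $P_i$ avoids every component of $F_i$; in particular this also records that $t_i$ lies in no component of $F_i$.

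The heart of the argument is internal disjointness. I would first observe that every tuple $A_p$ has pairwise distinct coordinates: this holds at $S$ because the terminals are distinct after the opening reduction, and it is preserved along each arc since the third item forces the new value $y_i$ to differ from every unchanged $x_j$. Consequently, for $i \neq j$ and a fixed vertex $w$, the set of times $p$ with $x^p_i = w$ is disjoint from the set of times with $x^p_j = w$, and each such set is a contiguous interval because coordinates only move forward. Now suppose towards a contradiction that some $w$ lies on both $P_i$ and $P_j$. One of the two occupancy intervals ends strictly before the other begins; say coordinate $a \in \{i,j\}$ leaves $w$, at a step that advances $a$ from $w$ to some $y_a$, strictly before the other coordinate $b$ ever reaches $w$. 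Applied with $j' = b$, the third item says that no path of colour $c(b)$ from the current position $x_b$ of $b$ to $t_b$ uses $w$, unless $w$ lies in a component of $F_b$. But $x_b$ is strictly before $w$ on $P_b$, so the subpath $P_b[x_b, t_b]$ is exactly such a path and it does use $w$; hence $w$ must lie in a component of $F_b$, contradicting the $F_b$-avoidance proved above. Therefore $P_i$ and $P_j$ share no vertex, and $P_1, \dots, P_l$ is the required solution (after undoing the reduction that split vertices into independent sets).

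I expect this last step to be the main obstacle, and within it the only delicate point is keeping the ``who-leaves-first'' case analysis uniform when $w$ happens to be a terminal. This is handled automatically: a coordinate that leaves $w$ is never sitting on its own terminal there, so $w \neq t_a$ for the leaving coordinate $a$; and when $w = t_b$ the contradiction is even more immediate, since every path to $t_b$ trivially uses $t_b$ while $t_b$ lies in no component of $F_b$. The remaining bookkeeping is routine.
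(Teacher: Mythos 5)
Your proposal is correct, and it builds the solution from an $S$--$T$ path in $D$ exactly as the paper does; the difference is in how disjointness is established. The paper argues by induction along the $D$-path, maintaining two invariants on the prefixes $P^j_1,\dots,P^j_l$: that they are internally disjoint, and that no colour-$c(i)$ path from the current head $x^j_i$ to $t_i$ avoiding the components of $F_i$ meets any other prefix except possibly at its head. This second, forward-looking invariant (quantified over all admissible continuations) is what certifies that each newly appended vertex is fresh. You instead argue globally: since every tuple has pairwise distinct coordinates, the occupancy intervals of a hypothetical common vertex $w$ are disjoint, and at the step where the earlier coordinate leaves $w$ the third arc condition is refuted by taking the \emph{realized} suffix $P_b[x_b,t_b]$ as the witness path, using that $w$, being the head of an advancing edge of the later coordinate, lies in no component of $F_b$. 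Your route is slightly more economical -- it needs only the actual path rather than an invariant over all continuations, and it avoids the induction entirely -- at the cost of a small case analysis around terminals, which you handle correctly. Both proofs share the same implicit assumption that $s_i$ lies in no component of $F_i$ (otherwise the stated conclusion cannot hold as written); you at least flag this, while the paper's assertion that $P^j_i$ avoids the components of $F_i$ leaves it tacit. I see no gap in your argument.
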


\begin{proof}
    Suppose there exists a path $P = X_1, \dots, X_r$ from $S$ to $T$ in $D$. For every $j \in [r]$, 
    note $X_j = (x^j_1, \dots, x^j_l)$. For every $i$ and $j$, consider the graph $P^j_i$ induced 
    by the vertices $x^t_i$, for $t \leq j$. By definition of $D$, $P^j_i$ is a path of colour $c(i)$ 
    from $s_i$ to $x^j_i$ avoiding the components of $F_i$. 
    We will prove by induction on $j$, that the paths $P^j_1, \dots, P^j_l$ are such that 
    \begin{itemize}
        \item All the $P^j_i$ are internally disjoint.
        \item For any $i$ and $r$, there is no path of colour $c(i)$ from $x^j_i$ to $t_j$
        avoiding the components in $F_i$ that uses any vertex of $P^j_r$ outside of possibly $x^j_r$.
    \end{itemize}
    Since the path starts at $S$, all the properties are satisfied when $j = 1$. Suppose now that this 
    is true for some $j \in [r-1]$ and let us show that the properties hold for $j+1$. 
    By definition of the arcs of $D$, there exists an index $i$ such that $x^j_ix^{j+1}_i$ is an edge 
    of colour $c(i)$ and for every other index $s$, $x^j_s = x^{j+1}_s$. This means that $P^{j+1}_i$ is the concatenation
    of $P^j_i$ with $x^{j+1}_i$ and all the $P^{j+1}_s$ are equal to $P^{j}_s$ for $s \not = i$. 
    Moreover, by definition of $D$, we know that for any $s \not = i$, $x^{j+1}_i$ is disjoint from all 
    the $x^j_s$ and by induction hypothesis $x^{j+1}_i$ does not belong to any of the $P^j_s$.
    This implies that the $P^j_s$ for $s \in [l]$, are disjoint.
    
    Any path of 
    colour $c(i)$ from $x^{j+1}_i$ to $t_i$ avoiding the components of $F_i$ is a subpath of a 
    path of colour $c(i)$ from $x^{j}_i$ to $t_i$ avoiding the components of $F_i$. This means 
    that no such path can use any vertex of $P^j_s = P^{j+1}_s$ outside of possibly $x^j_s$ for all $s \in [l]$
    different from $i$.

    Finally, for $s \in [l]$ different from $i$ we know that no path of colour $c(s)$ from 
    $x^j_r = x^{j+1}_r$ to $t_s$ avoiding the components in $F_s$ can use any vertex of 
    $P^j_i$ outside of $x^i_s$ by induction hypothesis. Moreover, these paths can also not use 
    $x^{j+1}_i$ by definition of the arcs of $D$, which ends our induction.

    This means that each $P^r_i$ is a path of colour $c(i)$ avoiding the components in $F_i$ from 
    $s_i$ to $t_i$, and all these paths are disjoints, which ends the proof. 
\end{proof}

Therefore, the problem reduces to deciding the existence of a path in $D$. 
As $|D| = n^l$, this can be done in $n^{O(l)}$.

\subsection{Reducing to the blind case}

The next lemma shows how to reduce to the blind case with some forbidden lists. 

\begin{lemma}\label{lem:decomp_sol}
    Let $G$ be a $k$-shortest graph, 
     $(s_1, t_1), \dots (s_l, t_l)$ a set 
    of pairs and $c$ a function from $[l]$ to $[k]$. 
    Let $P_1, \dots, P_l$ be a solution to the $k$-DSP defined by $G$, the $(s_i,t_i)$ and $c$.
    There exists a constant $C(k,l)$ depending only on $k$ and $l$, a set of path partitions $L_1, \dots, L_l$, a function $a$ that associates to each path of the $L_i$ a colour in $[k]$  
    and a function $b$ that associates to each path of the $L_i$ a set of bi-coloured components 
    with the following properties:
    \begin{itemize}
        \item For every $i \in [l]$, $L_i$ is a path partition of $P_i$ of at most $C(k,l)$ paths.
        \item If $P$ is a path of some $L_i$ with $a(P) = c_1$, then $P$ is a path of colour $c_1$ and 
        $b(P)$ consists of a set of at most $C(k,l)$ 
        bi-coloured components where one of the colours is $c_1$ and such that $P$ does not use any vertex
        in these components.
        \item For any pair of paths
        $H_i \in L_i$ and $H_j \in L_j$ such that $a(H_i) \not = a(H_j)$, 
        then either $H_i$ and $H_j$ are blind, $H_j$ is a path contained in one component of $b(H_i)$
        or $H_i$ is a path contained in one component of $b(H_j)$.  
    \end{itemize}

\end{lemma}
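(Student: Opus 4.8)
The plan is to prove this by induction on the number of colours $k$, using Lemma \ref{lem:blind_main} as the workhorse for a single pair. The base case $k=1$ is immediate: all requests share one colour, the hypothesis $a(H_i)\neq a(H_j)$ in the third bullet is never met, so $L_i=\{P_i\}$, $a=c$ and $b\equiv\emptyset$ work with $C(1,l)=1$. So assume $k\ge 2$ and that the statement holds for $k-1$ colours with constant $C(k-1,l)$.

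For the inductive step I would single out colour $k$ and alternate two operations. The \emph{colour-$k$ reduction}: while some current piece of a colour-$k$ path and some current piece of a path of colour $<k$ are neither blind nor inside a common bi-coloured component, apply Lemma \ref{lem:blind_main} to this pair and replace the two partitions by their intersections with its output (as defined in the preliminaries, the intersection has size at most the sum of the two sizes). By Lemma \ref{lem:compo_inter_subpath} each colour-$k$ path meets every bi-coloured component of colours $(i,k)$ in a single subpath, so once this stabilises I cut every path at the boundaries of all such components, separating each piece into a \emph{free} piece, meeting no component of any colours $(i,k)$, and an \emph{entangled} piece, lying inside one component $S$ of some colours $(i,k)$. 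By Lemma \ref{lemma_diff_pos} the orders $\le_i$ and $\le_k$ coincide on $S$, so an entangled piece is simultaneously a path of colour $i$ and of colour $k$; I relabel every entangled piece with such a colour $i\in[k-1]$, leave the free colour-$k$ pieces labelled $k$, and put each component $S$ into the forbidden list $b$ of every free colour-$k$ piece (which, being free, avoids $S$). The \emph{recursion}: having moved all entangled mass down to colours in $[k-1]$, the pieces carrying a label in $[k-1]$ form an internally disjoint family of coloured paths, to which I apply the induction hypothesis.

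With this setup the free colour-$k$ pieces behave well against everything labelled $<k$: against an entangled piece they are in the relation ``contained in a component of the forbidden list'' (the entangled piece lies in some $S\in b$), and against another free colour-$k$ piece they carry the same label. The subtlety is that this must survive the recursion, so the two operations have to be \emph{interleaved}: refining a colour-$<k$ piece inside the recursion can reintroduce a violation with a free colour-$k$ piece, after which one re-runs the colour-$k$ reduction, then the recursion, and so on. The crucial point making this terminate is that the forbidden-component relation is monotone in \emph{both} arguments under passing to subpaths --- if $H_j\subseteq S$ and $H_i$ avoids $S$ then this remains true for all their subpaths --- so every relation once recorded through a component is permanent, and each round strictly moves entangled mass down in colour or removes a violation. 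Bounding the number of rounds by a constant per colour and tracking that each round multiplies the partition sizes by $\mathrm{poly}(l)\cdot C(k-1,l)$ yields a recurrence of the form $C(k,l)\le\bigl(l\,C(k-1,l)\bigr)^{O(1)}$, which solves to $C(k,l)=l^{O(5^k)}$, the constant $5$ being essentially the number of refinement rounds paid per colour.

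The step I expect to be the main obstacle is precisely this non-monotonicity of \emph{blindness} under refinement. Blindness of $(H_i,H_k)$ means neither sees the other; the direction ``$H_k$ does not see $H_i$'' is preserved when $H_i$ is subdivided (seeing $H_i$ requires hitting its interior, which only shrinks), but the direction ``$H_i$ does not see $H_k$'' can be destroyed, since it depends on colour-$i$ paths to the \emph{new} endpoint of a subpiece of $H_i$. This is why one cannot run the colour-$k$ reduction and the recursion once each in sequence, and why the forbidden lists are needed at all: wherever possible one converts a fragile blind relation into the robust forbidden-component form, and the genuinely blind relations are re-established by a bounded number of reprocessing rounds in the spirit of Lemma \ref{lem:blind_main}. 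Making the potential that drives this interleaving decrease while keeping every $L_i$ of size $C(k,l)$ is the technical heart of the argument.
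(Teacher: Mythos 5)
Your skeleton is essentially the paper's: induction on the number of colours, Lemma~\ref{lem:blind_main} applied to every cross-colour pair, a split of the colour-$k$ pieces into those lying inside bi-coloured components (relabelled with the other colour and passed to the recursion) and those avoiding them (kept at colour $k$ with the components recorded in $b$), and the induction hypothesis applied to everything else. But two things prevent the proposal from being a proof. The smaller one: you cut the colour-$k$ paths at the boundaries of ``all'' bi-coloured components of colours $(i,k)$ and put ``each component $S$'' into the forbidden lists. That family is not bounded in $k$ and $l$ (every edge carrying two colours lies in some such component, and a colour-$k$ path can traverse linearly many of them), so both the number of pieces and the sets $b(\cdot)$ would be unbounded. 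The construction only works because one cuts at the boundaries of the boundedly many components that Lemma~\ref{lem:blind_main} produces as witnesses of non-blind pairs (at most $Cl^2$ of them in the paper's accounting); a ``free'' piece avoids this particular family $B$, not every bi-coloured component. Without this restriction the claimed bound $C(k,l)$ does not exist.

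The more serious gap is the step you yourself flag as the technical heart: the interleaving of the colour-$k$ reduction with the recursion, driven by an unspecified decreasing ``potential'', is left unproven --- and it rests on a misdiagnosis. Blindness \emph{is} monotone under passing to subpaths, in both arguments: if $H_i'=H_i[u',v']$ sees $H_j'\subseteq H_j$, then the witnessing colour-$i$ path from an internal vertex of $H_i'$ to $v'$ meeting the interior of $H_j'$ extends, by concatenation with $H_i[v',v]$ (concatenation of paths of colour $i$ is again a path of colour $i$), to a colour-$i$ path to the endpoint of $H_i$ meeting the interior of $H_j$; interiors and internal vertices only shrink when passing to subpaths, so $H_i$ sees $H_j$. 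Hence once a pair is blind (or in the ``contained in a component of $b$'' relation, which is trivially inherited by subpaths), all pairs of subpaths remain so; the recursion cannot reintroduce violations with the free colour-$k$ pieces, and a single pass per colour suffices --- no interleaving and no potential argument are needed. As written, your proof delegates its crucial step to an iteration whose termination and size control you do not establish; proving the monotonicity above (or following the paper's single-pass construction) is what actually closes the argument. The only iteration genuinely required lives inside Lemma~\ref{lem:blind_main} itself, where it symmetrises the one-sided ``does not see'' guarantee of Lemma~\ref{lem:blind_1}, and it is bounded for the same monotonicity reason.
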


Note that if $P_i$ is a path of colour $j$, then any path partition of $P_i$ consists of paths of colour $j$.
The function $a$ is there to reassign the colour of some paths belonging to bi-coloured components in 
order to achieve the last property of the lemma.

\begin{proof} 
    Let $C$ be the constant from Lemma \ref{lem:blind_main}. We will prove by induction on 
    $k$ that the lemma is true with $C(k,l) \leq  (7Cl)^{5^k}$. When $k =1$, there is only one colour 
    and setting $L_i = \{P_i\}$ for all $i$ satisfies the properties, and thus $C(1,l) \leq l$.

    Suppose now that $k >1$, and let $I_1$ denote the set of indices $i \in [l]$ such that $c(i) = 1$. Note that we can assume $I_1$ to be non empty, or the induction step is trivial. For every pair of indices $i,j \in [l]$, let $Q_{i,j}, Q_{j,i}$ denote the path partitions of 
    $P_i$, $P_j$ obtained by applying Lemma \ref{lem:blind_main} to this pair and 
    for every $i \in [l]$, let $Q_i$ denote the intersection of all the $Q_{i,j}$. Note that, since 
    every $Q_{i,j}$ has size $C$, this implies that the $Q_i$ have size at most $C l$. 
    For any $i \in I_1$, let $Q'_i$ denote the set of paths of $Q_i$, for which there exists some other path among some $Q_j$ with $j \in [k] \setminus I_1$ such that the pair is not blind. 
    By Lemma \ref{lem:blind_main}, all these paths belong to some bi-coloured component 
    of colour $1$ and some other colour $t$. Let $B_i$ denote the set of all these bi-coloured components and let $R_i = Q_i \setminus Q'_i$. Let $R = \bigcup_{ i \in I_1} R_i$, $B = \bigcup_{ i \in I_1} B_i$ and note that $|B| \leq Cl^2$ and $|R| \leq Cl^2$.
    
    For any path $R_t \in R$, we know that the intersections of $R_t$  
    with any component $C_j \in B$ is a subpath by Lemma \ref{lem:compo_inter_subpath}. 
    Let $e_j$ be the last edge of $R_t$ before $C_j$ and $h_j$ the first edge after. Let $a_t$ 
    and $b_t$ denote the first and last vertex of $R_t$, and consider 
    $R_{t,j} = \{R_t[a_t, t(e_j)], e_j, R_t[h(e_j), t(h_j)],$ $ h_j, R_t[h(h_j), b_t] \}$
    a path partition of $R_t$. Note that, except the two edges $e_j$ and $h_j$, each 
    path of $R_{t,j}$ is either disjoint from $C_j$ or a path of this component.
    Let $L(R_t)$ denote the path 
    partition of $R_t$ obtained by taking the intersection of all the $R_{t,j}$. We know that, 
    since $|B| \leq Cl^2$, $L(R_t) \leq 5Cl^2$. Moreover, we know that for any path $P'$ of $L(R_t)$, 
    and any $C_j \in B$, there is a path $r_{t,j} \in R_{t,j}$ such that $P'$ is a subpath of $r_{t,j}$.
    In particular it means that $P'$ is either an edge, disjoint from $C_j$, or a path of $C_j$. 
    Let $R^2_t$ be the set of paths of $L(R_t)$ which belong 
    to one of the component of $B$, and $R^1_t = L(R_t) \setminus R^2_t$. Note that every path in $R^1_t$ is either 
    an edge or a path disjoint from all the components of $B$.
 
    Let $H_1$ denote the set of paths in all the $Q'_i$ for $i \in I_1$ and all the $R^2_t$ for $R_t \in R$. Note that for every path $H' \in H_1$, $H'$ 
    is path of a bi-coloured component of $B$. Denote by $c'(H')$ the colour of this component which is not $1$. 
    We will now consider $H'$ as a path of colour $c'(H')$ 
    (possibly reversing the endpoints if the component is a component of $G_{1,c'(H')}^-$).
    Let $G_1$ be the $(k-1)$-shortest graph obtained from $G$ be removing the partition associated to 
    colour $1$ and removing all the edges which are edges of colour $1$ only. 
    Consider now the $(k-1)$-DSP problem defined on $G_1$ by all the endpoints of the paths in $Q_i$ for $i  \in [l]$, $i \not \in I_1$, 
    considered as path of colour $c(i)$, and all the paths in $H' \in H_1$ considered as path of 
    colour $c'(H')$. Note that the set of paths in $Q_i$ and $H_1$ is a solution to this problem and moreover, 
    there is at most $7(Cl^2)^2$ requests, as each $Q_i$ is smaller than $Cl$ and $H_1$ is smaller than $6(Cl^2)^2$. 
    
    By induction hypothesis, there exists a path partition of all the paths of the $Q_i$ for $i  \in [l]$, $i \not \in I_1$
    and $H_1$ as well as two functions $a'$ and $b'$ defined on these paths such that each of these 
    path partitions consists of at most $C(k-1, 7(Cl^2)^2)$ paths, and $b'$ associates to each path 
    at most $C(k-1, 7(Cl^2)^2)$ bi-coloured components. 
    Let us define the path partitions $L_i$, as well as $a$ and $b$ as follows: 
    For every $i \in I_1$, 
    $L_i$ is the union of all the paths in $R^1_t$ for some $R_t \in R_i$ as well as all the paths in the path partitions of the paths in $R^2_t$ and $Q'_i$ obtained by applying induction. For every path $P$ of some $R^1_t$, 
    let $a(P) = 1$ and $b(P) = B$. For all the other paths of $L_i$, $a$ and $b$ correspond to the value 
    of $a'$ and $b'$ on these paths. 
    Likewise, for every $i  \in [l]$ such that $i \not \in I_1$, $L_i$ consists of the union of the path partitions for the paths 
    in $Q_i$ obtained by applying induction, and the function $a$ and $b$ correspond to the $a'$ and $b'$ on these paths. 

    Let us now show that the $L_i$ and functions $a$ and $b$ satisfy the required properties. 
    First, it is clear that the $L_i$ thus defined are path partitions, as they are obtained by 
    replacing paths of some path partitions by their own path partition. Moreover, $|L_i| \leq 
    7(Cl^2)^2 \cdot C(k-1, 7(Cl^2)^2) \leq 7(Cl^2)^2 \cdot (7^2(Cl^2)^2)^{5^{k-1}} \leq  (7Cl)^{5^k} $. 
    Likewise, for any paths $P$ in these partitions, $b(P)$ is smaller than 
    $\max\{C(k-1, Cl^2), |B|\} \leq (7Cl)^{5^k} $. 
    Now suppose $H_i$ is a path of $L_i$ and $H_j$ is a path of $L_j$ such that $a(H_i) \not = a(H_j)$.
    If none of these paths belong to some $R^1_t$ for $R_t \in R$, then the last property of the lemma is satisfied for 
    $H_i$ and $H_j$ by induction and because $a$ and $b$ correspond to $a'$ and $b'$ on these paths. 
    Suppose now that one of the paths, say $H_i$ belongs to $R^1_t$ for some $R_t \in R$. 
    Because $R^1_t$ is a subpath of an element of $R$, 
    it means that if $H_j$ is not a subpath of some path of $Q'_s$ for $s \in I_1$, then by definition of $R$, $H_i$ and $H_j$ are blind.
    However, if $H_j$ is a subpath of some path of $Q_s'$, then $H_j$ is a path belonging to some component of $B$. However, $b(H_i) = B$, which ends the proof.
\end{proof}

Finally, we can prove our main result.

\begin{proof}[Proof of Theorem \ref{thm:main}]
    Suppose there exists a solution $P_1, \dots, P_l$ to the $k$-DSP problem defined by
    $G$, the $(s_i,t_i)$ and $c$. Let $L_1, \dots L_l$, $a$ and $b$ be the path partitions 
    and functions obtained by applying Lemma \ref{lem:decomp_sol} to $P_1, \dots, P_l$. For every $i \in [l]$, let $P_{i,1}, \dots, P_{i,l_i}$ denote the paths of $L_i$ and $(s^i_1, t^i_1), \dots (s^i_{l_i}, t^i_{l_i}) $ the endpoints of these paths. Remember that by Lemma \ref{lem:decomp_sol}, $|L_i| \leq C(k,l)$ for all $i \in [l]$. 

    Suppose we guess all the $(s^i_j, t^i_j)$, as well as the functions $a$ and $b$ for each of 
    the $P_{i,j}$, and consider the $k$-DSP problem defined by all the remaining 
    pairs $(s^i_j, t^i_j)$, then the set of paths $P_{i,j}$ is a solution to this problem such that, for any pair of 
    paths $P_{i,j}$, $P_{i',j'}$ such that $a(P_{i,j}) \not = a(P_{i',j'})$, either $P_{i,j}$ and 
    $P_{i,j}$ are blind, $P_{i,j}$ is a path contained in one component of $b(P_{i',j'})$ or 
    $P_{i',j'}$ is a path contained in one component of $b(P_{i,j})$. 
    This means that we can apply the algorithm of Lemma \ref{lem:blind_case} to find a solution of 
    the $k$-DSP defined by $(s^i_j, t^i_j)$ in $n^{O(C(k,l))}$. 
    By concatenating for each $i$ all the paths of this solution corresponding to the paths of $L_i$, we obtain a solution to the initial $k$-DSP problem.  
    
    As there is at most $n^{O(C(k,l))}$ choices for the $(s^i_j, t^i_j)$, $a(P_{i,j})$ and $b(P_{i,j})$ this gives an algorithm running in time $n^{O(C(k,l))}$, which ends the proof.
\end{proof}

\section*{Acknowledgements}
The author wishes to thank Frédéric Havet and Saket Saurabh for their useful comments on the manuscript.

\end{document}